\newcommand{\Href}[2]{\hyperref[#2]{#1~\ref{#2}}}
\newtheorem{thm}{Theorem}[section]
\newtheorem{prp}{Proposition}[section]
\newtheorem{lem}{Lemma}[section]
\newtheorem{cor}{Corollary}[section]
\newtheorem{claim}{Claim}[section]
\theoremstyle{definition}
\newtheorem{dfn}{Definition}[section]
\newtheorem{rem}[dfn]{Remark}
\newcommand{\st}{:\;}
\def\R{{\mathbb R}}%
\newcommand{\Red}{\R^d}
\newcommand{\hballf}{\hbar}
\providecommand{\parenth}[1]{\left(#1\right)}%
\providecommand{\braces}[1]{\left\{#1\right\}}%
\providecommand{\brackets}[1]{\left[#1\right]}%
\newcommand{\ball}[1]{\mathbf{B}^{#1}}
\newcommand{\norm}[1]{\left\|#1\right\|}
\newcommand{\enorm}[1]{\left|#1\right|}
\newcommand{\funpos}[1]{\mathcal{E}\! \left[ #1\right]}
\newcommand{\funppos}[1]{\mathcal{E}^{+}\! \left[ #1\right]}
\def\polar{\circ}
\newcommand{\polarset}[1]{{#1}^{\polar}}%
\newcommand{\iprod}[2]{\left\langle#1,#2\right\rangle}%
\newcommand{\id}{\mathrm{Id}}
\def\supp{\mathop{\rm supp}}%
\title{The John inclusion for log-concave functions}
\subjclass[2020]{52A23 (primary), 52A40, 46T12}
\keywords{log-concave function, John ellipsoid, L\"owner ellipsoid}
\author{Grigory Ivanov}
\address{Pontif\'icia Universidade Cat\'olica do Rio de Janeiro \\
Departamento de Matem\'atica, \\
Rua Marqu\^es de S\~ao Vicente, 225 \\
Edif\'{\i}cio Cardeal Leme, sala 862, \\
22451-900 G\'{a}vea, Rio de Janeiro, Brazil}
\email{grimivanov@gmail.com}
\thanks{The author is supported by Projeto Paz and by CAPES (Coordena\c{c}\~ao de Aperfei\c{c}oamento de Pessoal de N\'ivel Superior) - Brasil, grant number 23038.015548/2016-06.}
\begin{document}
\begin{abstract}
John's inclusion states that a convex body in $\mathbb{R}^d$ can be covered by the $d$-dilation of its maximal volume ellipsoid. We obtain a certain John-type inclusion for log-concave functions. As a byproduct of our approach, we establish the following asymptotically tight inequality:
\\
\noindent
For any log-concave function $f$ with finite, positive integral, there exist a positive definite matrix $A$, a point $a \in \mathbb{R}^d$, and a positive constant $\alpha$ such that
\[
\chi_{\mathbf{B}^{d}}(x) \leq \alpha f\!\!\left(A(x-a)\right) \leq \sqrt{d+1} \cdot 
e^{-\frac{\left|x\right|}{d+2} + (d+1)},
\]
where $\chi_{\mathbf{B}^{d}}$ is the indicator function of the unit ball $\mathbf{B}^{d}$.
\end{abstract}
\maketitle

\section{Introduction}

The maximal volume ellipsoid contained within a given convex body, called the \emph{John ellipsoid}, is fundamental in modern convexity and asymptotic geometric analysis. Fritz John, in his seminal paper \cite{John}, derived the following property of the maximal volume ellipsoid, sometimes referred to as the \emph{weak John theorem} \cite{artstein2015asymptotic} or \emph{John's inclusion} :
\begin{prp}\label{prp:john_inclusion}
Let $K$ be a convex body in  $\Red$ whose John ellipsoid is the unit ball $\ball{d}.$  Then the following inclusion holds:
\begin{equation}\label{eq:johninclusion}
\ball{d} \subset K \subset d \cdot \ball{d}.
\end{equation}
\end{prp}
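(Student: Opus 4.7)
The plan is to deduce the inclusion from John's structural characterization of the maximal volume ellipsoid: if $\ball{d}$ is the John ellipsoid of $K$, then there exist contact points $u_1, \ldots, u_m \in \partial \ball{d} \cap \partial K$ and positive weights $c_1, \ldots, c_m$ with
\[
\sum_i c_i u_i = 0 \qquad \text{and} \qquad \sum_i c_i \, u_i \otimes u_i = \id.
\]
Taking the trace of the second relation gives $\sum_i c_i = d$. The left-hand inclusion $\ball{d} \subset K$ is then the defining property of the John ellipsoid.

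For the outer bound, the first geometric observation I would make is that at each contact point $u_i$ the unique tangent hyperplane $\braces{y \st \iprod{y}{u_i} = 1}$ of $\ball{d}$ must also support $K$: any supporting hyperplane of $K$ at $u_i$ supports the inscribed body $\ball{d}$ at $u_i$, and smoothness of $\ball{d}$ leaves only one candidate. Hence $K$ lies inside every half-space $\braces{y \st \iprod{y}{u_i} \leq 1}$. For an arbitrary $x \in K$, setting $\alpha_i := \iprod{x}{u_i}$, this gives the upper bound $\alpha_i \leq 1$, while Cauchy--Schwarz gives the lower bound $\alpha_i \geq -\enorm{x}$. The decomposition of identity also delivers $x = \sum_i c_i \alpha_i u_i$ (so $\enorm{x}^2 = \sum_i c_i \alpha_i^2$) and the moment condition $\sum_i c_i \alpha_i = 0$.

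The decisive step is a short algebraic trick: consider the weighted sum $\sum_i c_i (\alpha_i - 1)(\alpha_i + \enorm{x})$. Each summand is a non-positive number times a non-negative one, by the two bounds on $\alpha_i$, so the whole sum is $\leq 0$; expanding the product and using the three relations $\sum c_i = d$, $\sum c_i \alpha_i = 0$, and $\sum c_i \alpha_i^2 = \enorm{x}^2$ collapses the sum exactly to $\enorm{x}(\enorm{x} - d)$. The inequality $\enorm{x}(\enorm{x} - d) \leq 0$ then forces $\enorm{x} \leq d$, as required.

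The main obstacle is spotting this algebraic identity. One has to package the tangent constraint $\alpha_i \leq 1$ together with the Cauchy--Schwarz bound $\alpha_i \geq -\enorm{x}$ into a single term-by-term non-positive expression whose weighted sum, via the three John identities, telescopes precisely to $\enorm{x}(\enorm{x}-d)$. Without this specific product, naive estimates---for instance, applying the triangle inequality directly to $x = \sum_i c_i \alpha_i u_i$---lose a factor of two and only give the weaker bound $\enorm{x} \leq 2d$.
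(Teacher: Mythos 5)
Your proof is correct, and it is the classical argument (the one in Ball's survey and in \cite{artstein2015asymptotic}): all the identities you use ($\sum c_i = d$, $\sum c_i\alpha_i = 0$, $\sum c_i\alpha_i^2 = \enorm{x}^2$, $-\enorm{x}\le\alpha_i\le 1$) are justified, and the expansion of $\sum_i c_i(\alpha_i-1)(\alpha_i+\enorm{x})$ does collapse to $\enorm{x}(\enorm{x}-d)\le 0$. The paper itself does not prove Proposition~\ref{prp:john_inclusion}; it quotes it and instead emphasizes, in \eqref{eq:johninclusion_with_polars} and in Lemma~\ref{lem:John_inclusion_relaxed_case}, the \emph{polar} route: from the same John identities one writes $-x=\sum_i c_i\parenth{\enorm{x}-\iprod{u_i}{x}}u_i$ with nonnegative coefficients, concluding that $\frac{\ball{d}}{d}$ is contained in the convex hull of the contact points, hence in $\polarset{K}$, which is equivalent to $K\subset d\cdot\ball{d}$ by bipolarity. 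The two arguments use the same ingredients (the decomposition of the identity plus the supporting half-spaces $\iprod{\cdot}{u_i}\le 1$ at contact points), but yours bounds $\enorm{x}$ directly on the primal side via the quadratic identity, while the paper's version produces the stronger intermediate statement that the contact points themselves already capture a ball of radius $1/d$ in the polar --- which is exactly the form that survives the passage to the functional setting, where a direct primal inclusion is unavailable.
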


Recently, the notion of the John ellipsoid has been extended to the setting of logarithmically concave functions \cite{alonso2018john, ivanov2022functional, ivanovIMRN}. The generalization is rather straightforward. 
Instead of convex bodies, one considers upper semi-continuous log-concave functions of finite and positive integral, which will be called \emph{proper} log-concave functions.
The set of ellipsoids is the set of “affine positions” of the unit ball $\ball{d}.$ In the functional setting, one considers the  \emph{positions} of a given function $g$ 
on $\Red$, defined as
\[
\funpos{g} 
= 
\braces{\alpha\,g\!\parenth{A x + a}
\st 
A\in\R^{d\times d}\text{ non-singular},\;\alpha>0,\;a\in\Red}.
\]
Instead of set inclusions, one compares functions pointwise: 

We say that a function $f_1$ on $\Red$ is \emph{below} another function $f_2$ on $\Red$ (or that $f_2$ is \emph{above} $f_1$)
and denote it as $f_1 \leq f_2$ if $f_1$ is pointwise less than or equal to $f_2,$
that is, $f_1(x) \leq f_2(x)$ for all $x \in \Red.$  

We will call a \emph{John function of $f$ with  respect to a given function $w$} any solution to the  
 
 \medskip
 \noindent
\textbf{Functional John problem:} Find
\begin{equation}\label{eq:john_func_problem}
\max\limits_{g \in \funpos{w} } 
	\int_{\Red} g
	\quad \text{subject to} \quad
	g \leq f.
\end{equation}

The only remaining issue is to choose which function $w$ should be considered as the analogue of the unit ball. 
Following \cite{ivanov2022functional}, we will mostly use the \emph{height function} of $\ball{d+1}$, defined as
\[
\hballf(x) = 
 \begin{cases}
\sqrt{1 - \enorm{x}^2},& 
\text{if } x \in \ball{d},\\
0,&\text{otherwise},
 \end{cases}
\]
as $w$ in \eqref{eq:john_func_problem}.

Even though many properties of solutions to \eqref{eq:john_func_problem} for various choices of $w$ have been understood, there has been no analogue of John's inclusion in the functional setting. 
Our goal is to correct this oversight.

 We believe that the root of the issue lies in the hidden polar duality in \eqref{eq:johninclusion}.
 We will elaborate on this in the next section.

Recall that the polar $\polarset{K}$ of a set $K$ in $\R^d$ is defined by  
\[
\polarset{K} = \braces{\,p \in \R^d \st \iprod{p}{x} \leq 1
\text{ for all } x \in K}\,.
\]  
Using the notion of polarity, one can re-write John's inclusion  \eqref{eq:johninclusion} in the following equivalent form:
\begin{equation}\label{eq:johninclusion_with_polars}
\ball{d} \subset K  
\quad 
\text{and} 
\quad
 \frac{\ball{d}}{d} \subset \polarset{K}.
\end{equation}

Interestingly, this form can be easily translated to the functional setting. 

Recall that the \emph{polar function} $\polarset{f}$ of a given non-negative function $f$ is defined as
\[
\polarset{f}(p) 
= 
\inf\limits_{\{x \st f(x) > 0\}} 
\frac{e^{-\iprod{p}{x}}}{f(x)}.
\]
 
The main result of this paper is the following John-type inclusion in the functional setting:

\begin{thm}\label{thm:john_inclusion_one_f}
Assume that an upper semi-continuous log-concave function $f \colon \Red \to [0, +\infty)$ of finite and positive integral is in a position such that $\hballf$ is a John function of $f$ with respect to $\hballf.$  Then 
\[
\hballf \;\leq\; f
\quad\text{and}\quad
e^{-(d+1)} \cdot \hballf\!\circ\!\brackets{(d+1)\,\id_d}\leq\; \polarset{f},
\]
where $\id_d$ is the identity on $\R^d.$
\end{thm}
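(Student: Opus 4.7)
The first inclusion $\hballf \leq f$ is just the constraint in \eqref{eq:john_func_problem} and is automatic once $\hballf$ is the John function; all the content of the theorem lies in the polar inclusion, and following the author's hint I would attack it through polar duality. In the classical case, $\ball{d}/d \subset \polarset{K}$ is a Löwner-type statement dual to John's condition, and its proof extracts the bound from John's decomposition-of-identity characterization of the maximal volume ellipsoid; my plan is to imitate this route in the functional setting.

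Concretely, my plan has three steps. \emph{Step 1.} Invoke the optimality characterization of the functional John function from \cite{alonso2018john, ivanov2022functional, ivanovIMRN}. Under the standing hypothesis, this should furnish finitely many contact points $x_i \in \ball{d}$ with $f(x_i) = \hballf(x_i)$, a subgradient $\ell_i$ of $-\log f$ at $x_i$, and positive coefficients $c_i$ satisfying a John-type decomposition of identity in $\R^{d+1}$ — the extra dimension arising because a log-concave function on $\Red$ naturally encodes a convex object of one higher dimension. \emph{Step 2.} Use the decomposition to derive a pointwise upper bound on $f$. Summing the supporting inequalities $-\log f(x) \geq -\log \hballf(x_i) + \iprod{\ell_i}{x - x_i}$ against the weights $c_i$ and substituting the decomposition of identity — in direct imitation of the classical derivation of $K \subset d \cdot \ball{d}$ from John's conditions — should produce a bound of the form
\[
f(x) \;\leq\; \frac{e^{\,(d+1)\,-\,\iprod{p}{x}}}{\hballf\!\parenth{(d+1)\,p}}
\qquad \text{for every } p \text{ with } \enorm{(d+1)\,p} \leq 1.
\]
\emph{Step 3.} Read this inequality on the polar side: by the definition of $\polarset{f}$ it is equivalent to $\polarset{f}(p) \geq e^{-(d+1)}\, \hballf\!\parenth{(d+1)\,p}$, which is exactly the polar inclusion asserted in the theorem.

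The main obstacle is Step 2, namely extracting the precise constants $d+1$ and $e^{-(d+1)}$. Both trace back to the fact that the identity matrix in $\R^{d+1}$ has trace $d+1$, which plays the role of ``$d$'' in the one-dimension-higher lift of John's inclusion; matching them rigorously requires choosing the right $(d+1)$-dimensional lift for $f$ — most naturally one in which $\hballf$ lifts to the upper half of $\ball{d+1}$ — so that the classical John proof applies to the lifted body and its polar. A secondary subtlety is that $f$ may have unbounded support while $\hballf$ does not; the inequality from Step~2 must therefore hold outside $\ball{d}$ as well, which requires controlling the behavior of the $\ell_i$ at infinity to guarantee that their weighted combination dominates $\log f$ globally.
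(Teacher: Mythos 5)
Your overall strategy (contact points, a John-type decomposition, exponential upper bounds from the touching condition, then reading the result off on the polar side via the definition of $\polarset{f}$) is the same route the paper takes, and your Step 3 translation is correct. But the heart of the argument is your Step 2, and as described it does not go through. Summing the supporting inequalities $f\le \ell_{u_i}$ against the decomposition weights (the natural normalization is $\lambda_i=c_i\,\hballf^2(u_i)$, which sum to $1$) produces, after substituting $\sum c_iu_i=0$ and $\sum c_i\enorm{u_i}^2=d$, only the bound at the single polar point $p=0$, i.e. $\polarset{f}(0)\ge e^{-d}$ (equivalently $\norm{f}_\infty\le e^d$, the paper's Proposition~\ref{prp:norm_in_john_position}). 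To get a uniform bound on all of $\frac{\ball{d}}{d+1}$ you must (a) know that the relevant convex hull of contact data covers $\frac{\ball{d}}{d+1}$ --- this is the paper's Lemma~\ref{lem:John_inclusion_relaxed_case}/Corollary~\ref{cor:john_inclusion_dec_for_f}, which uses both $\sum c_iu_i\otimes u_i=\id_d$ and the centroid condition and the fact that $\sum c_i=d+1$, and is not automatic --- and (b) choose $p$-dependent weights so that the accumulated exponent $\sum_i\lambda_i\enorm{u_i}^2/\hballf^2(u_i)$ stays below $d+1$; the individual terms $\enorm{u_i}^2/\hballf^2(u_i)$ blow up when a contact point approaches the unit sphere, so "direct imitation of the classical derivation'' with one fixed set of weights does not control the constant. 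The paper resolves (b) by first bounding $\polarset{f}$ at the contact points themselves, interpolating log-concavely between $\polarset{f}(0)\ge e^{-d}$ and the value of $\polarset{\parenth{\ell_{u_i}}}$ at $u_i/\hballf^2(u_i)$ (Lemma~\ref{lem:polar_func_lower_bound}, with the technical Claim~\ref{claim:technical_bump_func}), which yields $\polarset{f}(u_i)\ge e^{-(d+1)}$, and then spreading this over $\frac{\ball{d}}{d+1}\subset\mathrm{conv}\{u_i\}$ by log-concavity of $\polarset{f}$.

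A second genuine gap is your treatment of degenerate contact. The characterization (Proposition~\ref{prp:johncondfunc}) allows contact points $u_i$ with $\enorm{u_i}=1$ lying on the boundary of $\supp f$; there $\hballf(u_i)=0$, there is no finite subgradient of $-\log f$, and the touching bound degenerates to the half-space condition $\supp f\subset\{\iprod{u_i}{x}\le 1\}$. Your "secondary subtlety'' about the behavior of the $\ell_i$ at infinity is not the real issue; the real issue is that the weighted-exponential argument must survive these vertical contacts. The paper devotes Lemma~\ref{lem:reduction_to_positive_f} to this, replacing $f$ by John bump functions and using a rotation-in-$\R^{d+1}$ perturbation plus hypo-convergence to reduce to the regular case. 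So your outline points in the right direction (and your $(d+1)$-dimensional lift intuition matches the paper's remark and its perturbation argument), but without (a), (b), and the boundary-contact reduction, the constants $d+1$ and $e^{-(d+1)}$ are not actually derived.
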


One of the most basic inequalities related to log-concave functions states that a proper log-concave function is above some position of the indicator function $\chi_{\ball{d}}$ of the unit ball $\ball{d}$  and is below a position of the function $e^{-\enorm{x}}.$
As a byproduct of our method, we establish the following asymptotically optimal version of this basic inequality:
\begin{lem}\label{lem:flat_functions_inclusion}
Let $f$ be a proper log-concave function on $\R^d.$ There exist a positive definite matrix $A,$ a point $a \in \R^d,$ and a positive constant $\alpha$ such that
\[
\chi_{\ball{d}}(x) 
\;\leq\; 
\alpha\,f\!\parenth{A\parenth{x-a}} 
\;\leq\; 
\sqrt{d+1}\,\cdot 
e^{-\frac{\enorm{x}}{d+2} + (d+1)}.
\]
\end{lem}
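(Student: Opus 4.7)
I plan to apply the classical John inclusion (Proposition~\ref{prp:john_inclusion}) to a suitably chosen superlevel set of $f$, and then use concavity of $\log f$ to extract the required exponential decay. Let $M := \max f$ (finite and positive because $f$ is proper and upper semi-continuous), fix the level parameter
\[
\lambda := \frac{2d}{d+2},
\]
and consider the convex body $K := \{x \in \R^d : f(x) \geq M\, e^{-\lambda}\}$, which is convex by log-concavity of $f$ and contains a maximizer of $f$.

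By Proposition~\ref{prp:john_inclusion} I pick an affine map $T(y) = Ay + c$ with $A$ symmetric positive definite (obtained as the inverse symmetric square root of the defining matrix of the ellipsoid) such that $T(\ball{d})$ is the John ellipsoid of $K$, and the inclusion becomes
\[
\ball{d} \;\subseteq\; T^{-1}(K) \;\subseteq\; d\,\ball{d}.
\]
Setting $g(y) := f(T(y))$ and $\alpha := e^\lambda/M$, the left inclusion is $\{g \geq M e^{-\lambda}\} \supseteq \ball{d}$, which directly gives $\chi_{\ball{d}}(y) \leq \alpha\, g(y)$—the left inequality of the lemma. Writing $T(y) = A(y - a)$ with $a := -A^{-1}c$ matches the format in the statement.

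For the right inequality, the maximizer $y^\ast := T^{-1}(\text{argmax } f)$ lies in $T^{-1}(K)\subseteq d\ball{d}$, so $|y^\ast|\leq d$. For each unit vector $u$, let $t_0(u) \in (0, 2d]$ be the exit time of the ray $y^\ast + tu$ from $T^{-1}(K)$; note that $t_0(u) \leq |y^\ast| + d \leq 2d$ because the endpoint $y^\ast + t_0(u)u$ still lies in $d\ball{d}$. Concavity of $\log g$ along the ray, together with $\log g(y^\ast) = \log M$ and $\log g(y^\ast + t_0(u)u) = \log M - \lambda$, yields
\[
\log g(y^\ast + tu) \;\leq\; \log M - \frac{\lambda\, t}{t_0(u)} \;\leq\; \log M - \frac{t}{d+2} \qquad \text{for } t \geq t_0(u),
\]
where $\lambda/(2d) = 1/(d+2)$ is used in the last step. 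Combining this decay with the trivial bound $g \leq M$ and splitting on whether $|y - y^\ast| < 2d$ (in which case $|y| < 3d$ and I only need $\alpha g(y) \leq e^\lambda$) or $|y - y^\ast| \geq 2d$ (in which case $|y| \geq d$ and the decay estimate applies via $|y - y^\ast| \geq |y| - d$), one verifies that $\alpha\, g(y) \leq \sqrt{d+1}\, e^{-|y|/(d+2) + (d+1)}$ in both cases. The two arithmetic inputs that close the argument are $5d/(d+2) \leq d+1$ (for the first case, with $|y|$ up to $3d$) and $3d/(d+2) \leq d+1$ (for the second case), both of which hold with considerable slack for every $d \geq 1$.

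\textbf{Main obstacle.} The delicate point is calibrating $\lambda$: too small a value reduces the decay rate below the required $1/(d+2)$, while too large a value inflates the prefactor $\alpha = e^\lambda/M$ and ruins the right-hand side. The choice $\lambda = 2d/(d+2)$ is forced by the requirement that the decay rate $\lambda/(2d)$ equal $1/(d+2)$, and the resulting prefactor still fits comfortably under $\sqrt{d+1}\,e^{d+1}$. The constant $\sqrt{d+1}$ in the statement is not tight in this argument—the bookkeeping above actually yields a much smaller constant—but it is the natural asymptotic form that matches the polar-derived bound $f(y) \leq \frac{d+2}{\sqrt{2d+3}}\, e^{-|y|/(d+2) + (d+1)}$ arising from Theorem~\ref{thm:john_inclusion_one_f} via the choice $p = y/((d+2)|y|)$ in the polarity inequality.
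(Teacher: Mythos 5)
Your proof is correct, and it takes a genuinely different route from the paper's. The paper derives the lemma as a corollary of its main result: it places $f$ in the position where $\hballf$ is its John function, invokes \Href{Theorem}{thm:John_inclusion_hballf} to get $\polarset{f}\ge e^{-(d+1)}\chi_{\frac{\ball{d}}{d+1}}$ and hence $f(x)\le e^{-\enorm{x}/(d+1)+(d+1)}$, combines this with $\frac{1}{\sqrt{d+1}}\chi_{\sqrt{d/(d+1)}\,\ball{d}}\le\hballf\le f$, and rescales; the factor $\sqrt{d+1}$ and the exponent $1/(d+2)$ come out of that rescaling. You bypass the functional machinery entirely: you apply the classical John inclusion (\Href{Proposition}{prp:john_inclusion}) to the single superlevel set $\{f\ge Me^{-\lambda}\}$ with $\lambda=2d/(d+2)$ and extract the exponential decay from one-dimensional log-concavity along rays from the maximizer. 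Your argument is more elementary and self-contained --- it needs neither the existence of the John function nor \Href{Proposition}{prp:johncondfunc} nor \Href{Theorem}{thm:John_inclusion_hballf} --- and, as you observe, leaves slack in the constant; what it does not provide is the stronger conclusion $\hballf\le f$ in the chosen position, which the paper gets for free and which is the point of the exercise there. Two small things to tidy up: (i) state explicitly that $\{f\ge Me^{-\lambda}\}$ is a bounded, full-dimensional convex body (standard for proper upper semi-continuous log-concave $f$, but required to apply \Href{Proposition}{prp:john_inclusion}); (ii) the bound $\log g(y^{*}+tu)\le\log M-\lambda t/t_0(u)$ can fail at $t=t_0(u)$ itself if $g$ drops discontinuously at the exit point, so the case split should be $\enorm{y-y^{*}}\le 2d$ versus $\enorm{y-y^{*}}>2d$; the arithmetic is unaffected since $\enorm{y}\le 3d$ still holds in the first case.
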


A dual construction to the John ellipsoid is the so-called L\"owner ellipsoid, which is the minimal volume ellipsoid containing a given convex body. In the classical setting, the two ellipsoids are related by polar duality — the unit ball $\ball{d}$ is the John ellipsoid of $K$ if and only if $\ball{d}$ is the L\"owner ellipsoid of $\polarset{K}.$ 
This polarity property directly yields an inclusion for the L\"owner ellipsoid similar to that of 
\Href{Proposition}{prp:john_inclusion}.

The notion of the L\"owner ellipsoid can be extended to the setting of log-concave functions as well (see \cite{li2019loewner, ivanov2021functional, ivanovIMRN, lasserre2015generalization,alonso2022best}). However, we will show in \Href{Section}{sec:absence_lowner_inclusion} that there is no L\"owner-type inclusion for reasonable candidates for a L\"owner function.

\subsection{Notations}
The standard Euclidean unit ball in $\R^d$ is denoted by $\ball{d}.$
We identify the space $\R^d$ with the subspace of $\R^{d+1}$ consisting of vectors whose last coordinate vanishes. We use $[n]$ to denote $\{1, \dots, n\}$ for a natural $n.$  The support $\supp f$ of a non-negative function $f$ on $\R^d$ is the set on which the function is positive:
\[
\supp f = \braces{x \in \R^d \st f(x) > 0 }.
\]
The supremum norm of a bounded function $f$ on $\R^d$ is denoted by 
$\norm{f}_\infty.$

\section{John ellipsoid and John function}
In this section, we recall several useful properties of the John ellipsoid and John functions. 

\subsection{John ellipsoid and Duality}
\Href{Proposition}{prp:john_inclusion} follows from Fritz John's characterization \cite{John, ball1992ellipsoids} of the maximal volume ellipsoid within a convex body:

\begin{prp}\label{prp:johncond}
Let $K$ be a convex body in  $\Red$ containing the unit ball  $\ball{d}.$  Then the following assertions are equivalent:
\begin{enumerate}
 \item
$\ball{d}$ is the John ellipsoid of $K.$
 \item
There are points 
${u}_1,\ldots,{u}_m$ on the boundaries of $\ball{d}$ and $K,$ and
positive weights $c_1,\ldots,c_m$ such that 
\begin{equation}\label{eq:johncond}
\sum\limits_{i \in [m]} c_i \,{u}_i\otimes {u}_i = \id_d 
\quad \text{and} \quad 
\sum\limits_{i \in [m]} c_i \,{u}_i = 0.
\end{equation}
\end{enumerate}
\end{prp}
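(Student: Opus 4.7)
The plan is to handle the two implications separately.

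For $(2)\Rightarrow(1)$: Take any competitor ellipsoid $E=T\ball{d}+v\subset K$ with $T$ symmetric positive definite and $v\in\Red$. Each contact point $u_i$ lies in $\partial\ball{d}\cap\partial K$, and since $\ball{d}$ is inscribed in $K$ and tangent to $\partial K$ at $u_i$, the hyperplane $\{x\st\iprod{x}{u_i}=1\}$ supports $K$ at $u_i$. Hence $E\subset K$ forces $\iprod{v}{u_i}+\enorm{Tu_i}=h_E(u_i)\leq 1$. Using $\enorm{Tu_i}\geq\iprod{Tu_i}{u_i}$, multiplying by $c_i$, and summing, \eqref{eq:johncond} collapses this to $\operatorname{tr}(T)\leq d$; AM--GM then gives $\det T\leq 1$. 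Thus no inscribed ellipsoid exceeds $\operatorname{vol}(\ball{d})$, and strict concavity of $\log\det$ on positive definite matrices furnishes the uniqueness, identifying $\ball{d}$ as the John ellipsoid of $K$.

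For $(1)\Rightarrow(2)$, I would run a separation/perturbation argument. Let $\mathcal{U}=\partial\ball{d}\cap\partial K$, a compact set, and let $C$ denote the closed convex cone in $\operatorname{Sym}(d)\times\Red$ generated by the pairs $\{(u\otimes u,u)\st u\in\mathcal{U}\}$. The goal is to show $(\id_d,0)\in C$; once this is established, Caratheodory's theorem in the finite-dimensional ambient space $\operatorname{Sym}(d)\times\Red$ extracts finitely many nonnegative weights $c_i$ and contact points $u_i$ satisfying \eqref{eq:johncond}. Assuming to the contrary that $(\id_d,0)\notin C$, Hahn--Banach supplies a pair $(S,w)\in\operatorname{Sym}(d)\times\Red$ such that $\iprod{Su}{u}+\iprod{w}{u}\leq 0$ for every $u\in\mathcal{U}$ and $\operatorname{tr}(S)>0$. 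The idea is to read $(S,w)$ as an infinitesimal deformation of $\ball{d}$ that both preserves the inclusion in $K$ and strictly increases the volume, contradicting (1).

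Concretely, I take $E_\tau=(\id_d+\tau S)\ball{d}+\tau w$ for small $\tau>0$. Its volume is $\det(\id_d+\tau S)\operatorname{vol}(\ball{d})=(1+\tau\operatorname{tr}(S)+O(\tau^2))\operatorname{vol}(\ball{d})$, strictly above $\operatorname{vol}(\ball{d})$. For the inclusion I check $h_{E_\tau}(n)\leq h_K(n)$ at every unit normal $n$. For $n\in\mathcal{U}$, $h_K(n)=1$ and Taylor expansion gives $h_{E_\tau}(n)=1+\tau(\iprod{Sn}{n}+\iprod{w}{n})+O(\tau^2)\leq 1+O(\tau^2)$ by the separation inequality; for $n$ at positive distance from $\mathcal{U}$, compactness provides $h_K(n)\geq 1+\delta$ for some $\delta>0$ while $h_{E_\tau}(n)=1+O(\tau)$, which is harmless for small $\tau$.

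The main obstacle I anticipate is precisely the residual $O(\tau^2)$ excess at contact normals, which narrowly prevents $E_\tau$ itself from sitting inside $K$. My plan to absorb it is a mild deflation: replace $E_\tau$ by $E_\tau'=(1-M\tau^2)(E_\tau-\tau w)+\tau w$ for a constant $M$ large enough to kill the excess coming from the Hessian term $\enorm{Sn}^2$ in the expansion of $h_{E_\tau}$. The volume drops by $(1-M\tau^2)^d=1-dM\tau^2+O(\tau^4)$, so the linear gain $\tau\operatorname{tr}(S)$ still dominates for $\tau$ small, yielding an ellipsoid of volume strictly exceeding $\operatorname{vol}(\ball{d})$ and contained in $K$, contradicting (1) and closing the argument.
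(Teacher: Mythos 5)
The paper does not prove this proposition at all --- it is Fritz John's classical characterization, quoted with citations to \cite{John, ball1992ellipsoids} --- so there is no in-paper argument to compare against; your proposal is essentially the standard (Ball-style) proof. Your direction $(2)\Rightarrow(1)$ is correct: since $\ball{d}\subset K$ and $u_i\in\partial\ball{d}\cap\partial K$, the hyperplane $\braces{x\st\iprod{x}{u_i}=1}$ indeed supports $K$, the $v$-terms cancel via $\sum_i c_iu_i=0$, the quadratic terms sum to $\sum_i c_i\iprod{Tu_i}{u_i}=\operatorname{tr}(T)\le\sum_i c_i=d$ (traces of the first identity give $\sum_i c_i=d$), and AM--GM finishes; uniqueness of the maximizer is standard.

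The direction $(1)\Rightarrow(2)$ contains a genuine gap in the containment check. Your dichotomy is ``contact normals'' versus ``normals at positive distance from $\mathcal{U}$, where compactness gives $h_K\ge1+\delta$.'' But no single $\delta>0$ works for \emph{all} unit vectors outside $\mathcal{U}$: $h_K$ is continuous on the sphere and equals $1$ exactly on $\mathcal{U}$, so its infimum over non-contact directions is $1$ whenever $\mathcal{U}$ is a proper nonempty subset. Compactness yields a uniform $\delta(\rho)$ only on the directions at distance at least $\rho$ from $\mathcal{U}$, and in the intermediate band $0<\operatorname{dist}(n,\mathcal{U})<\rho$ the separation inequality gives you nothing: $\iprod{Sn}{n}+\iprod{w}{n}$ may well be strictly positive there, producing a \emph{first-order} excess $\tau\cdot(\text{small positive})$ in $h_{E_\tau}(n)$ against a gap $h_K(n)-1$ that can be arbitrarily small. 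Your deflation by $1-M\tau^2$ absorbs only the second-order remainder at the exact contact normals and does not touch this first-order term, so the inclusion $E_\tau'\subset K$ is not established. The standard repair is to replace $S$ by $S-\epsilon\,\id_d$ for small $\epsilon>0$: then $\iprod{Sn}{n}+\iprod{w}{n}\le-\epsilon$ on $\mathcal{U}$ while still $\operatorname{tr}(S)>0$, hence by uniform continuity the linear term is at most $-\epsilon/2$ on a \emph{fixed} neighbourhood $N$ of $\mathcal{U}$, where consequently $h_{E_\tau}(n)\le1-\tau\epsilon/2+O(\tau^2)\le1\le h_K(n)$; off $N$ your uniform-$\delta$ argument applies, and no deflation is needed. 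You should also record that the cone $C$ is closed (it is generated by a compact set bounded away from the origin), so that the separation theorem applies, and that $\mathcal{U}\ne\emptyset$ by maximality of $\ball{d}$.
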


We believe the core issue in obtaining a John-type inclusion in the functional setting lies in the standard identification of \(\R^d\) with its dual space \(\bigl(\R^d\bigl)^*\). Looking at John's condition \eqref{eq:johncond}, the operator \(u_i \otimes u_i\) can be written as \(u_i u_i^T\). Formally, \(u_i^T\) is a functional in the dual space \(\bigl(\R^d\bigl)^*\). Also, the polar set \(\polarset{K}\) is a subset of the dual space, as defined in Banach space theory.

Moreover, by examining the proof of \Href{Proposition}{prp:john_inclusion} (which is even more transparent in more general settings of criteria for the maximal volume position of one convex body inside another \cite[Theorem 3.8]{GLMP04}), one sees that, instead of inclusion \eqref{eq:johninclusion}, one obtains the equivalent inclusion \eqref{eq:johninclusion_with_polars}.

We now turn to an explanation on the functional side of the story.

\subsection{John's inclusion for functions and Duality}
The main problem is that a direct “translation” of John's inclusion \eqref{eq:johninclusion} to the functional setting makes no sense. 
Indeed, it is easy to see that the class of functions $w$ for which the
family $\braces{\,g \in \funpos{w} : g \leq f}$ is nonempty for any proper
log-concave function $f$ must consist of functions with bounded support. 
Hence, all considered functional analogues of the unit ball are of bounded support. 
But then, for a strictly positive log-concave function $f$ (for instance, the standard Gaussian density), there is no position of $w$ above $f.$ Thus, to obtain an analogue of \Href{Proposition}{prp:john_inclusion}, one must either relax the inclusion (e.g., by cutting off the “tails” of the functions) or, equivalently, re-formulate it in a way that can be translated into the functional setting. We adopt the latter approach via polar duality.

\subsection{John functions}

Theorem 4.1 of \cite{ivanov2022functional} essentially shows that a John function of any proper log-concave function $f$ with respect to $\hballf$ exists and is unique.
Accordingly, in this setting we refer to this unique solution as the \emph{John function} of $f$.

Furthermore, in \cite[Theorem 5.1]{ivanov2022functional} it is shown that the following John-type characterization holds:

\begin{dfn}[John's decomposition of the identity for functions]
We say that points 
${u}_1,\ldots,{u}_m \in \ball{d}\subset \Red$ and 
positive weights $c_1, \ldots ,c_m$ form  \emph{John's decomposition of the identity for functions} if they satisfy the identities:
\begin{enumerate}
\item $\sum\limits_{i \in [m]} c_i\,{u}_i\otimes {u}_i = \id_d;$
\item $\sum\limits_{i \in [m]} c_i\,\hballf\parenth{{u}_i} \cdot \hballf\parenth{{u}_i} = 1;$
\item $\sum\limits_{i \in [m]} c_i\,u_i =0.$
\end{enumerate}
\end{dfn} 

\begin{prp}\label{prp:johncondfunc} 
Let $f$ be a proper log-concave function on $\Red$ such that $\hballf \leq f.$
Then the following assertions are equivalent:
\begin{enumerate}
 \item $\hballf$ is the John function of $f.$
 \item There exist points ${u}_1,\ldots,{u}_m \in \ball{d}\subset \Red$ and positive weights
$c_1,\ldots,c_m$ forming  John's decomposition of the identity for functions, such that 
${u}_1,\ldots,{u}_m$ are contact points; that is, for each $i \in [m]$, either
$f(u_i)= \hballf(u_i)$ or $u_i$ is a unit vector on the boundary of $\supp f.$
\end{enumerate}
\end{prp}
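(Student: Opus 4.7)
My plan is to treat the Functional John problem as a finite-dimensional optimization over the parameters $(\alpha, A, a) \in (0,\infty) \times GL_d(\R) \times \Red$, with the infinite family of pointwise constraints $\alpha\,\hballf(Ax+a) \leq f(x)$, and to read off the three identities of the John decomposition from the Karush--Kuhn--Tucker (KKT) optimality conditions at the critical point. Since $\int \alpha\,\hballf(Ax+a)\,dx = (\alpha/|\det A|)\int \hballf$, the problem is equivalent to maximizing $\log\alpha - \log|\det A|$ on the feasible set; under assumption $(1)$, the point $(\alpha,A,a) = (1,\id,0)$ is optimal. The active constraint set at this point splits naturally into \emph{interior} contacts (where $f(u_i) = \hballf(u_i) > 0$, hence $|u_i|<1$) and \emph{boundary-of-support} contacts (where $|u_i|=1$ and both $f$ and $\hballf$ vanish).

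For the direction $(1)\Rightarrow(2)$, I would compute the gradient of each active constraint at $(1,\id,0)$. For an interior contact, passing to logarithms recasts the constraint as $\log\alpha + \tfrac{1}{2}\log(1-|Ax+a|^2) - \log f(x) \leq 0$, whose gradient in $(\alpha,A,a)$ equals $\bigl(1,\, -u_iu_i^T/\hballf(u_i)^2,\, -u_i/\hballf(u_i)^2\bigr)$. For a boundary-of-support contact, the constraint degenerates into $1 - |Au_i+a|^2 \leq 0$, with gradient $\bigl(0,\, -2u_iu_i^T,\, -2u_i\bigr)$. KKT stationarity $\nabla(\log\alpha - \log|\det A|) = \sum_i \lambda_i\,\nabla(\text{interior}) + \sum_i \nu_i\,\nabla(\text{boundary})$ with $\lambda_i,\nu_i \geq 0$, combined with the substitution $c_i := \lambda_i/\hballf(u_i)^2$ on interior points and $c_i := 2\nu_i$ on boundary points, turns the $A$-, $a$-, and $\alpha$-components of the stationarity relation into exactly conditions $(1)$, $(3)$, and $(2)$ of the John decomposition, respectively. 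A Carath\'eodory reduction in the $(d^2+d+1)$-dimensional span of constraint gradients then truncates the Lagrange measure to finitely many contact points.

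For the reverse direction $(2)\Rightarrow(1)$, I would take any competitor $g = \beta\,\hballf(Bx+b) \leq f$ and read off the pointwise bound $\beta^{2}\bigl(1-|Bu_i+b|^{2}\bigr) \leq \hballf(u_i)^{2}$ at each contact $u_i$ (which remains valid in the boundary case, where $\hballf(u_i)^{2}=0$ and $|Bu_i+b|\geq 1$). The three identities of the decomposition are designed precisely to combine these pointwise bounds into the global determinant inequality $\beta \leq |\det B|$: condition $(1)$ is Ball's geometric Brascamp--Lieb condition allowing the passage from products of one-variable bounds to a $d$-dimensional determinant, condition $(3)$ annihilates the cross-term produced by the translation $b$, and condition $(2)$ supplies the correct normalization when weighting by $\hballf(u_i)^{2}$. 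The main obstacle I anticipate is executing this last step: the geometric Brascamp--Lieb inequality as usually formulated handles marginals $f_i(\langle u_i,\cdot\rangle)$, whereas our bounds involve the quadratic forms $1-|Bu_i+b|^{2}$, so the correct chain is likely a weighted AM--GM / Jensen argument tailored to the profile $\sqrt{1-t^{2}}$, with a careful simultaneous treatment of interior and boundary contacts so that the degenerate case $\hballf(u_i)=0$ does not corrupt the averaging; a secondary technicality is that, in the KKT step itself, the logarithmic parametrization of the constraint breaks down exactly at the boundary contacts and must be replaced from the outset by the quadratic ``outside-the-ball'' form used above.
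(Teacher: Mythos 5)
You should first note that the paper does not prove this proposition at all: it is quoted from \cite[Theorem 5.1]{ivanov2022functional}, so there is no in-paper argument to compare with, and your plan has to stand on its own. Your formal dictionary is correct as bookkeeping --- the linearizations at $(\alpha,A,a)=(1,\id_d,0)$ do turn the $\alpha$-, $A$- and $a$-components of a multiplier identity into conditions (2), (1) and (3) after the rescalings you indicate --- but both implications have genuine gaps beyond that.

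For $(1)\Rightarrow(2)$ you invoke ``KKT stationarity'' as if this were a smooth program with finitely many constraints, whereas it is a semi-infinite program with constraints indexed by all $x\in\Red$, and producing a \emph{finitely supported} multiplier is exactly the content of the theorem. The usual semi-infinite Fritz John/KKT theorems need a compact index set, constraint gradients depending continuously on the index, and a constraint qualification excluding a vanishing objective multiplier; here the interior-contact gradient $\parenth{1,\,-u u^{T}/\hballf^{2}(u),\,-u/\hballf^{2}(u)}$ blows up as contact points accumulate at the unit sphere, which is precisely where your parametrization changes type. One must either show that multiplier mass cannot escape to the equator, or pass to normalized gradients and prove that their limiting directions correspond to genuine boundary-of-support contacts (this uses that contact points $u_k\to u$ with $\enorm{u}=1$ and $f(u_k)=\hballf(u_k)\to 0$ force $u\in\partial\,\supp f$, via continuity of log-concave functions on the interior of their support --- true, but it must be argued). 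Moreover, $1-\enorm{Au_i+a}^{2}\le 0$ is not a constraint of the problem: $f$ is only upper semi-continuous, so $\supp f$ need not be open and $f(u_i)$ may be positive at a boundary contact; the true constraint is $\supp g\subseteq\supp f$, which has to be handled through the supporting hyperplane $\braces{x\st\iprod{u_i}{x}=1}$ (its linearization happens to be a positive multiple of yours, but the necessity argument must be run against the actual constraints, and the degenerate case of a zero objective multiplier must be ruled out).

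For $(2)\Rightarrow(1)$, the step you yourself flag as the main obstacle is the heart of the proof, and your scalar bounds cannot carry it. Summing $\beta^{2}\parenth{1-\enorm{Bu_i+b}^{2}}\le\hballf^{2}(u_i)$ with weights $c_i$ and using all three identities gives only $\beta^{2}\parenth{(d+1)-\mathrm{tr}\parenth{B^{T}B}-(d+1)\enorm{b}^{2}}\le 1$, which does not bound $\beta/\left|\det B\right|$; no AM--GM reweighting of these numbers alone can produce $\det B$, since they carry no information at all at contacts with $\enorm{Bu_i+b}\ge 1$. The workable route (and the one consistent with how this paper itself uses the decomposition, e.g.\ in the remark after \Href{Lemma}{lem:John_inclusion_relaxed_case} and in \Href{Lemma}{lem:reduction_to_positive_f}) is to lift to $\R^{d+1}$: the competitor corresponds to the symmetric ellipsoid $\braces{(x,y)\st\enorm{Bx+b}^{2}+y^{2}/\beta^{2}\le 1}$, the contacts lift to $(u_i,\pm\hballf(u_i))$, conditions (1)--(3) become a classical John decomposition of $\id_{d+1}$, the domination $g\le f\le\ell_{u_i}$ supplies supporting half-space/slab bounds for the lifted ellipsoid at the lifted contacts, and the classical support-function-plus-AM--GM argument on the singular values of the lifted map yields the integral bound. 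Finally, at a boundary contact your inequality $\enorm{Bu_i+b}\ge 1$ does not follow from $g(u_i)\le f(u_i)$ alone when $f(u_i)>0$; it again requires the support inclusion together with the supporting-hyperplane observation above.
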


We also bounded in \cite[Lemma 4.5]{ivanov2022functional} the supremum norm of the John function of $f$:

\begin{prp}\label{prp:norm_in_john_position}
Let $f$ be a proper  log-concave function on $\Red$ such that 
$\hballf$ is its John function. 
Then 
\[
\norm{f}_\infty \;\leq\; e^d.
\]
\end{prp}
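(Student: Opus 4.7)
The plan is to exploit the John decomposition provided by \Href{Proposition}{prp:johncondfunc}: take contact points $u_i \in \ball{d}$ with weights $c_i > 0$, and at each interior contact point use concavity of $\log f$ together with the fact that $\log f$ and $\log \hballf$ must share the same supporting hyperplane there. Summing the resulting tangent bounds with the right weights will make the three John identities collapse everything to the constant $d$.

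Concretely, I would write $a_i = \hballf(u_i)^2 = 1 - \enorm{u_i}^2$ and pick $x^* \in \R^d$ with $f(x^*) = \norm{f}_\infty =: M$; such an $x^*$ exists because $f$ is upper semi-continuous and its super-level sets are bounded by log-concavity together with integrability of $f$. For each interior contact point ($\enorm{u_i} < 1$) one has $\log f \geq \log \hballf$ with equality at $u_i$, while $\log \hballf$ is smooth there with gradient $-u_i / a_i$. A short argument shows that every super-gradient of the concave function $\log f$ at $u_i$ is also a super-gradient of $\log \hballf$ at $u_i$, and is therefore forced to equal $-u_i/a_i$. Evaluating the resulting tangent bound at $x^*$ gives
\[
\log M \;\leq\; \tfrac{1}{2}\log a_i + \frac{\enorm{u_i}^2 - \iprod{u_i}{x^*}}{a_i}.
\]

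Next I would multiply by $c_i a_i$ and sum over the interior contact points. The three identities of the John decomposition combine nicely: condition (2) yields $\sum c_i a_i = 1$ (so the left-hand side becomes $\log M$), the trace of condition (1) gives $\sum c_i \enorm{u_i}^2 = d$, and condition (3) annihilates the inner product against $x^*$. When every contact point is interior, this collapses the bound to
\[
\log M \;\leq\; d + \tfrac{1}{2}\sum_i c_i\,a_i\log a_i \;\leq\; d,
\]
the last inequality because $a_i \in (0,1]$ forces $\log a_i \leq 0$.

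The main obstacle I anticipate is handling possible boundary contact points ($\enorm{u_i} = 1$, where $a_i = 0$ and the tangent argument fails). Such indices are invisible in the $c_i a_i$-weighted sum, but removing them does perturb the clean identities $\sum c_i \enorm{u_i}^2 = d$ and $\sum c_i u_i = 0$. The remedy is to use convexity of $\supp f$: since $\supp f \supset \ball{d}$ and $u_i \in \partial\ball{d}\cap\partial\supp f$, the hyperplane $\{y : \iprod{u_i}{y} = 1\}$ supports $\supp f$ at $u_i$, giving $\iprod{u_i}{x^*}\leq 1$. A direct bookkeeping split of the sum into interior and boundary parts then shows the boundary contributions cancel on both sides, so the estimate $\log M \leq d$ survives and yields $\norm{f}_\infty \leq e^d$.
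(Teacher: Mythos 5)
Your argument is correct, but it follows a genuinely different route from the paper's. The paper does not prove \Href{Proposition}{prp:norm_in_john_position} by a contact-point computation: it cites \cite[Lemma 4.5]{ivanov2022functional}, and for the more general \Href{Lemma}{lem:height_john_f_general} it sketches in \Href{Appendix}{sec:bound_on_height} a variational argument --- an extremal curve of fixed-height positions whose determinants, via Minkowski's determinant inequality, give a concave function dominated by a convex exponential, forcing $\norm{w}_\infty \geq e^{-d}\norm{f}_\infty$. You instead argue directly from the characterization in \Href{Proposition}{prp:johncondfunc}: the tangent bound at each interior contact point is exactly \Href{Corollary}{cor:touchingballbound} (i.e.\ $f \leq \ell_{u_i}$), and weighting it by $c_i\hballf^2(u_i)$ and summing makes the three identities of the decomposition collapse the estimate to $\log\norm{f}_\infty \leq d + \tfrac12\sum_i c_i\hballf^2(u_i)\log\hballf^2(u_i) \leq d$; boundary contact points contribute $-\sum c_i\parenth{1-\iprod{u_i}{x^*}} \leq 0$ by the supporting-hyperplane observation from the start of Section 3 (they do not literally cancel, but the sign is all you need, and your $\iprod{u_i}{x^*}\leq 1$ justification is sound). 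Your computation is in fact the primal counterpart of the paper's proof of \Href{Lemma}{lem:john_height_not_attained}, which runs the same weighted product on the polar side (bounding $\polarset{f}(0)$ via log-concavity of $\polarset{f}$ and \Href{Claim}{claim:polar_of_log-linear}) but sidesteps boundary contacts through the limiting reduction of \Href{Lemma}{lem:reduction_to_positive_f}. What your route buys: it is self-contained, avoids hypo-convergence entirely by treating boundary contacts head-on, and exhibits the explicit nonpositive deficit $\tfrac12\sum_i c_i\hballf^2(u_i)\log\hballf^2(u_i)$, which is precisely the quantity exploited for the strict bound $e^d-\epsilon_d$. What the paper's variational route buys: it needs no decomposition of the identity at all, and therefore yields the bound for the John function with respect to an arbitrary bounded-support log-concave $w$, not only $w=\hballf$. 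Two cosmetic points: attainment of the maximum at $x^*$ is not really needed (a near-maximizer and a limit suffice, though your compactness justification is also fine), and the weighted sum is nonvacuous because $\sum_i c_i\hballf^2(u_i)=1$ forces at least one interior contact point --- worth saying explicitly.
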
 

Interestingly, as we show in \Href{Subsection}{subsec:height_ball_john}, the bound $e^{d}$
cannot be attained for the John function, yet it is optimal in the case where $w$ in  Functional John problem \eqref{eq:john_func_problem} is the
indicator function of the unit ball (see \cite{alonso2018john}).

In view of a more general result \cite{ivanovIMRN}, we state a broader claim, whose proof we will sketch in \Href{Appendix}{sec:bound_on_height} because it closely follows the arguments from \cite[Theorem 1.1]{alonso2018john}:

\begin{lem}\label{lem:height_john_f_general}
Let $f, w \colon \Red\to[0,\infty)$ be two proper log-concave functions such that 
the support of $w$ is bounded. 
Then there is a solution $\tilde{g}$ to Functional John problem \eqref{eq:john_func_problem} and $\tilde{g}$ satisfies
\[
\norm{\tilde{g}}_{\infty} \;\leq\; \norm{f}_{\infty} 
\;\;\leq\; e^d \,\norm{\tilde{g}}_{\infty}.
\]
\end{lem}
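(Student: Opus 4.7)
The first inequality $\norm{\tilde{g}}_\infty \leq \norm{f}_\infty$ is immediate from the constraint $\tilde g \leq f$, so the content of the lemma is existence of a maximiser together with the bound $\norm{f}_\infty \leq e^d \norm{\tilde g}_\infty$. For existence I would run a standard compactness argument on a maximising sequence $g_n(x) = \alpha_n\, w(A_n(x - a_n))$: the bound $\alpha_n \norm{w}_\infty \leq \norm{f}_\infty$ controls the heights; the identity $\int g_n = \alpha_n \enorm{\det A_n}^{-1} \int w$ combined with $\int g_n \to \sup > 0$ pins $\enorm{\det A_n}$ in a compact interval; and the exponential decay of $f$ together with boundedness of $\supp w$ forbids $a_n$ from escaping to infinity or the singular values of $A_n$ from degenerating. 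Upper semicontinuity of $f$ then lets one pass to the limit.

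For the main inequality, write $\tilde g(x) = \alpha\, w(A(x - a))$ and set $M := \norm{\tilde g}_\infty = \alpha \norm{w}_\infty$. Pick $x_0 \in \Red$ with $f(x_0)$ close to $\norm{f}_\infty$, and for $s \in (0,1)$ introduce the perturbed position
\[
g_s(x)
\;=\;
\alpha_s\,
w\!\!\parenth{A\, \frac{x - s x_0 - (1-s)\, a}{1 - s}},
\]
which contracts $\tilde g$ by the factor $(1-s)$ toward $x_0$. With the substitution $x = (1-s) y + s x_0$ one has $g_s(x) = \alpha_s\, w(A(y - a))$, so log-concavity of $f$ together with $\tilde g(y) \leq f(y)$ gives
\[
f(x)
\;\geq\;
f(y)^{1-s} f(x_0)^s
\;\geq\;
\parenth{\alpha\, w(A(y-a))}^{1-s} f(x_0)^s.
\]
Choosing $\alpha_s = \alpha \parenth{f(x_0)/M}^{s}$ makes $g_s \leq f$, and a Jacobian computation yields $\int g_s = (1-s)^d \parenth{f(x_0)/M}^{s} \int \tilde g$. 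Optimality of $\tilde g$ then forces $(1-s)^d (f(x_0)/M)^s \leq 1$ for every $s \in (0,1)$; letting $s \to 0^+$ turns this into $\log(f(x_0)/M) \leq d$, and taking a supremising sequence in $x_0$ concludes $\norm{f}_\infty \leq e^d M$.

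I expect the main obstacle to be the existence part, and specifically the exclusion of degeneration of the singular values of $A_n$ and of the translations $a_n$: these quantities are not controlled by the height and integral bounds in an immediate way, and one must couple them to the decay of $f$. If $\supp g_n$ were to escape every compact set, then by the exponential decay of a proper log-concave $f$, the heights $\alpha_n$ would be forced toward zero, contradicting $\int g_n \to \sup > 0$. Once existence is secured, the perturbation argument is essentially a routine calculus exercise, and the constant $e^d$ appears simply as $\lim_{s \to 0^+} (1-s)^{-d/s}$.
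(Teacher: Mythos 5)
Your proof is correct, but it takes a genuinely different route to the bound $\norm{f}_\infty \le e^d\,\norm{\tilde g}_\infty$ than the paper does. The paper runs the argument through the appendix machinery: it introduces the fixed-height John problem \eqref{eq:john_problem_pos_height}, uses \Href{Proposition}{prp:inner-function-interpolation} together with Minkowski's determinant inequality \eqref{eq:minkowski_det_ineq} to show that the determinant $\Psi(\alpha)=\det A_\alpha$ of the optimal fixed-height position is $\tfrac{1}{d}$-concave on a logarithmic scale, and then plays this concavity off against the convex upper bound forced by optimality of the John position (a tangent-line comparison at $t_0=\ln\norm{w}_\infty$), from which $e^d$ emerges. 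You instead exhibit, for every $x_0$ with $f(x_0)>0$, the explicit competitor $g_s$ obtained by contracting the maximizer $\tilde g$ toward $x_0$ by the factor $1-s$ and raising it by $(f(x_0)/M)^s$, where $M=\norm{\tilde g}_\infty$; your verification that $g_s\le f$ is correct and hinges precisely on the pointwise bound $\alpha\,w(A(y-a))\le M$, after which optimality of $\tilde g$ and $s\to0^+$ give $f(x_0)\le e^d M$, with $e^d=\lim_{s\to0^+}(1-s)^{-d/s}$ playing the role of the paper's tangent-line step. Your route is more elementary --- no fixed-height curve, no uniqueness statement, no determinant inequality --- while the paper's longer detour produces reusable structure (the concave determinant profile along the curve of heights, in the spirit of \cite{alonso2018john, ivanovIMRN}). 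On existence you are essentially where the paper is, since the paper outsources compactness to \cite[Lemma 6.1]{ivanovIMRN}; your sketch is that standard argument, with two steps deserving a line each: the claim that degenerating heights are impossible should be phrased as $\alpha_n\to0$ forcing $\int g_n\to0$ because $g_n\le\min\braces{\alpha_n\norm{w}_\infty, f}$ and dominated convergence applies, and passing to the limit in the constraint $g_n\le f$ needs hypo-convergence or the continuity of an upper semi-continuous log-concave function relative to its support, since upper semicontinuity of $w$ alone bounds $\limsup_n w\parenth{A_n(x-a_n)}$ from above by $w\parenth{A(x-a)}$, which is the wrong direction at boundary points of $\supp w$.
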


\section{Inequalities for ``supporting'' conditions} 
The main observation in the proof of \Href{Proposition}{prp:john_inclusion} is the following “supporting” inclusion:

\emph{Assume $\ball{d}$ is a subset of a convex set $K$ in $\R^d,$
and let the unit vector $u$ lie on the boundary of $K.$ Then 
$K$ is contained in the half-space
}
\[
H_u^{\leq} 
= 
\braces{x \in \R^d \st \iprod{u}{x} \leq 1 }.
\]
 
In this section, we discuss an extension of this result to log-concave functions and derive some basic corollaries.

The following proposition follows immediately from the supporting condition for the corresponding convex functions and was formally proven in \cite[Lemma 3.1]{ivanov2022functional}:
\begin{prp}\label{prp:touching_cond_log_concave}
Let $\psi_1$ and $\psi_2$ be convex functions on $\Red$, and set $f_1= e^{-\psi_1}$ 
and $f_2 = e^{-\psi_2}.$ 
Suppose $f_2\leq f_1$ and $f_1(x_0) = f_2(x_0) >0$ at some point $x_0$ 
in the interior of the domain of $\psi_2.$ 
Assume that $\psi_2$ is differentiable at $x_0.$ Then $f_1$ and $f_2$ 
are differentiable at $x_0,$ 
$\nabla f_1(x_0)= \nabla f_2(x_0),$
and
\[
f_1(x) \;\leq\; f_2(x_0)\,\exp
\parenth{- \iprod{\nabla\psi_2(x_0)}{x - x_0}}
\]
for all $x \in \R^d.$
\end{prp}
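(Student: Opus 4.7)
The plan is to reduce to the convex side by setting $\psi_i = -\log f_i$; then $f_2\le f_1$ becomes $\psi_1\le\psi_2$, the equality $f_1(x_0)=f_2(x_0)>0$ becomes $\psi_1(x_0)=\psi_2(x_0)<\infty$, and the task is to show that $\psi_1$ is differentiable at $x_0$ with $\nabla\psi_1(x_0)=\nabla\psi_2(x_0)$. The pointwise bound on $f_1$ is then just the exponentiated supporting hyperplane inequality for the convex function $\psi_1$ at $x_0$.

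First I would verify that $x_0$ lies in the interior of the domain of $\psi_1$. Since $\psi_2$ is differentiable at $x_0$, it is finite on an open neighborhood of $x_0$; the dominance $\psi_1\le\psi_2$ then forces $\psi_1$ to be finite on the same neighborhood. Standard convex analysis supplies at least one subgradient $p\in\partial\psi_1(x_0)$, which gives the global inequality $\psi_1(x)\ge\psi_1(x_0)+\iprod{p}{x-x_0}$ for every $x\in\Red$.

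The key step is to pin $p$ down uniquely. Combining that subgradient bound with the first-order expansion of $\psi_2$ at $x_0$, and using $\psi_1(x_0)=\psi_2(x_0)$, I would obtain, as $x\to x_0$,
\[
0 \;\le\; \psi_2(x)-\psi_1(x) \;\le\; \iprod{\nabla\psi_2(x_0)-p}{x-x_0}+o\!\parenth{\enorm{x-x_0}}.
\]
Letting $x\to x_0$ along arbitrary directions forces $p=\nabla\psi_2(x_0)$. Since $p\in\partial\psi_1(x_0)$ was arbitrary, the subdifferential is the singleton $\{\nabla\psi_2(x_0)\}$, whence $\psi_1$ is differentiable at $x_0$ with $\nabla\psi_1(x_0)=\nabla\psi_2(x_0)$. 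Differentiability of both $f_i=e^{-\psi_i}$ at $x_0$ and the equality $\nabla f_1(x_0)=\nabla f_2(x_0)$ follow from the chain rule. Finally, exponentiating the subgradient inequality $\psi_1(x)\ge\psi_1(x_0)+\iprod{\nabla\psi_2(x_0)}{x-x_0}$ and using $f_2(x_0)=e^{-\psi_2(x_0)}$ yields the claimed estimate.

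I do not anticipate any serious obstacle: the argument is routine first-order convex analysis. The one point that merits explicit care is the interiority of $x_0$ in $\supp f_1$, because the hypothesis only states that $x_0$ is interior to the domain of $\psi_2$; without this preliminary step the subdifferential $\partial\psi_1(x_0)$ might be empty and the whole argument would break down.
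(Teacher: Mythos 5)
Your argument is correct and is essentially the proof the paper has in mind: the paper states the proposition follows from the supporting (subgradient) condition for the convex functions $\psi_1\le\psi_2$ touching at $x_0$, citing the formal proof in the earlier work, and your route --- interiority of $x_0$ in $\operatorname{dom}\psi_1$, identification of every $p\in\partial\psi_1(x_0)$ with $\nabla\psi_2(x_0)$, hence differentiability, then exponentiating the subgradient inequality --- is exactly that argument. No gaps; the one point you flag (interiority, so that $\partial\psi_1(x_0)\neq\emptyset$) is indeed the only detail requiring care, and you handle it.
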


For each $u \in \ball{d} \subset \R^d,$ define a function $\ell_{u} \colon \R^d \to [0, +\infty]$ by
\[
\ell_{u}(x) 
= 
\hballf(u)\, \exp \!\parenth{- \frac{1}{\hballf (u)^2}\,\iprod{u}{\,x - u}}
\]
if $\enorm{u} < 1;$ and by 
\[
\ell_{u}(x) =
\begin{cases} 
  0,      & \iprod{x}{u} \,\geq\, 1, \\ 
  +\infty,& \iprod{x}{u} \,<\, 1,
\end{cases}
\]
if $\enorm{u} = 1.$

As a corollary of \Href{Proposition}{prp:touching_cond_log_concave}, we get:
\begin{cor}\label{cor:touchingballbound}
 Let $g$ be a log-concave function on $\R^d$ such that $g \geq \hballf,$ and let $u$ be a  vector from $  \ball{d}$ with $g(u) = \hballf(u).$ Then  $g \leq \ell_u.$
\end{cor}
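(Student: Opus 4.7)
The plan is to apply \Href{Proposition}{prp:touching_cond_log_concave} to the pair $(\hballf, g)$ at the contact point $u$. Two cases arise, separated by whether $u$ lies in the interior or on the boundary of the support of $\hballf$; these correspond exactly to the two branches in the definition of $\ell_u$.

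First, for $\enorm{u} < 1$, the function $\hballf$ is strictly positive and smooth near $u$, with associated convex potential $\psi(x) = -\log \hballf(x) = -\tfrac{1}{2}\log(1-\enorm{x}^2)$. A direct differentiation yields $\nabla \psi(u) = u/(1-\enorm{u}^2) = u/\hballf^2(u)$. Applying \Href{Proposition}{prp:touching_cond_log_concave} with $f_1 = g$, $f_2 = \hballf$ and $x_0 = u$ is then immediate: the hypothesis $f_2 \leq f_1$ is the assumption $\hballf \leq g$, the equality $f_1(x_0) = f_2(x_0) > 0$ is the contact condition at $u$, and differentiability of $\psi$ at $u$ has just been verified. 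The resulting inequality $g(x) \leq \hballf(u)\exp\!\parenth{-\iprod{u/\hballf^2(u)}{x-u}}$ is precisely the defining formula for $\ell_u(x)$ in this regime.

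Second, for $\enorm{u} = 1$ we have $\hballf(u) = g(u) = 0$, and the required bound $g \leq \ell_u$ reduces to the assertion that $g$ vanishes on the half-space $\braces{x \st \iprod{u}{x} \geq 1}$. I would argue this by a convex combination trick: if some $v$ with $\iprod{u}{v} > 1$ satisfied $g(v) > 0$, then the ray $p_t = (1+t)u - tv$ would enter the open unit ball for small $t > 0$, as the derivative of $\enorm{p_t}^2$ at $t=0$ equals $2(1 - \iprod{u}{v}) < 0$; there $g(p_t) \geq \hballf(p_t) > 0$. The identity $u = \tfrac{t}{1+t}\,v + \tfrac{1}{1+t}\,p_t$ combined with log-concavity of $g$ then forces $g(u) > 0$, contradicting $g(u) = 0$. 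Boundary points with $\iprod{u}{x} = 1$ are handled by the same scheme (letting $v$ approach the bounding hyperplane, or invoking upper semi-continuity).

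The only substantive computation is the gradient identity $\nabla\psi(u) = u/\hballf^2(u)$ in the interior case; everything else is a mechanical check that the data match the coefficients in the definition of $\ell_u$. The main organizational obstacle is that $\ell_u$ has qualitatively different formulas for interior versus boundary contact, forcing the two-case split, but neither case presents any genuine difficulty once \Href{Proposition}{prp:touching_cond_log_concave} is in hand.
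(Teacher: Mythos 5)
Your interior case is exactly the argument the paper intends: the corollary is stated as an immediate consequence of \Href{Proposition}{prp:touching_cond_log_concave}, and your application of it with $f_1=g$, $f_2=\hballf$, $x_0=u$, together with the computation $\nabla\psi_2(u)=u/\hballf^2(u)$, reproduces the defining formula of $\ell_u$ for $\enorm{u}<1$. That part is correct and complete.

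The gap is in the boundary case $\enorm{u}=1$. Your convex-combination argument correctly gives $g=0$ on the \emph{open} half-space $\braces{x \st \iprod{u}{x}>1}$, but $\ell_u$ vanishes on the \emph{closed} half-space, and neither of your proposed patches settles the hyperplane $\iprod{u}{x}=1$. If $\iprod{u}{v}=1$, then $\enorm{p_t}^2=1+t^2\enorm{u-v}^2>1$ for every $t>0$, so the ray never enters the ball and the contradiction evaporates; letting $v$ approach the hyperplane only re-proves the open half-space statement. Upper semi-continuity is not among the hypotheses, and in any case it points the wrong way: it asserts $g(x)\ge\limsup_{y\to x}g(y)$, which cannot force $g(x)=0$ from the vanishing of $g$ on the open half-space. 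In fact the closed-half-space claim does not follow from the stated hypotheses: in $\R^2$ with $u=e_1$, let $T$ be the convex hull of the open unit disc and the point $(1,1)$ (note that $e_1\notin T$), and let $g=\chi_T$. Then $g$ is log-concave, $g\ge\hballf$, and $g(e_1)=0=\hballf(e_1)$, yet $g\parenth{(1,1)}=1>0=\ell_{e_1}\parenth{(1,1)}$; a variant of this example can even be arranged to be upper semi-continuous. So your Case 2 establishes what is actually provable (and what the paper's subsequent polar-function arguments really use, since for a proper, hence upper semi-continuous, $f$ the values on this tangent hyperplane are radial limits from the interior of the support and do not lower the infimum defining $\polarset{f}$), but the final assertion of your boundary case cannot be completed by the means you indicate; it reflects an edge case in the formulation of $\ell_u$ at $\enorm{u}=1$ rather than a fixable omission in your argument.
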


This observation is the key technical tool in our proof of \Href{Theorem}{thm:john_inclusion_one_f}.

\subsection{The origin yields almost everything}
We will often use the following direct corollary of the definition of the polar function: 
\begin{claim}\label{claim:norm_via_polar_function}
Assume a bounded, non-negative function $g \colon \R^d \to [0, \infty)$ takes at least one positive value. Then
\[
\polarset{g}(0) \;=\; \frac{1}{\norm{g}_\infty}.
\]
\end{claim}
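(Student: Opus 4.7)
The plan is to unpack the definition of the polar function at $p=0$; there is no real content beyond that. Substituting $p=0$ into
\[
\polarset{g}(p) = \inf_{\{x \st g(x) > 0\}} \frac{e^{-\iprod{p}{x}}}{g(x)}
\]
makes the exponential disappear, since $e^{-\iprod{0}{x}} = 1$ for every $x$. So I would write
\[
\polarset{g}(0) = \inf_{\{x \st g(x) > 0\}} \frac{1}{g(x)},
\]
and then invoke the elementary identity $\inf_{x \in S}\tfrac{1}{g(x)} = \tfrac{1}{\sup_{x \in S} g(x)}$, valid on any nonempty set $S$ on which $g$ is strictly positive. Since $g$ vanishes off its support, the supremum of $g$ over $\supp g$ equals the supremum of $g$ over all of $\R^d$, which by definition is $\norm{g}_\infty$. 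This gives the claimed equality.

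The hypotheses are exactly what is needed to make every quantity well-defined and nonzero: the assumption that $g$ takes at least one positive value guarantees both $\supp g \neq \varnothing$ (so the infimum is over a nonempty set) and $\norm{g}_\infty > 0$ (so the reciprocal is finite), while the boundedness of $g$ guarantees $\norm{g}_\infty < \infty$ (so the infimum above is strictly positive). There is no real obstacle here — the proof is essentially a two-line manipulation — so I would write it out in exactly that compact form, mentioning the hypotheses only to justify that both sides of the stated equality are positive real numbers rather than $0$ or $+\infty$.
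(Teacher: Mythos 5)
Your proof is correct and is exactly the intended argument: the paper states this claim without proof, calling it a direct corollary of the definition of the polar function, and your two-line unpacking (the exponential factor trivializes at $p=0$, then $\inf 1/g = 1/\sup g$ over the support, which carries the full supremum since $g\ge 0$) is precisely that computation. Nothing further is needed.
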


Define $\zeta\colon [0,1] \to [0, \infty)$ by 
\(\zeta(t) = t^{-t}\) on \((0,1]\) and \(\zeta(0) = 1.\)
\begin{claim}
\label{claim:technical_bump_func}
The function $\zeta$ is continuous and log-concave on $[0,1]$, and it attains its minimum, equal to 1, only at $0$ and $1$.
\end{claim}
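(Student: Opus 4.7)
The plan is to rewrite $\zeta$ in terms of its logarithm $h(t) := \log \zeta(t) = -t\ln t$ on $(0,1]$ and $h(0)=0$, and then read off the three claims from elementary calculus on $h$.

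For continuity, I would note that $t^{-t} = e^{-t\ln t}$ on $(0,1]$, and that $t\ln t \to 0$ as $t\to 0^+$. Hence $\zeta(t)\to 1 = \zeta(0)$, which combined with the obvious smoothness on $(0,1]$ gives continuity on $[0,1]$.

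For log-concavity, I would compute $h'(t) = -\ln t - 1$ and $h''(t) = -1/t < 0$ on $(0,1]$, so $h$ is strictly concave on $(0,1]$. Since $h$ is continuous on $[0,1]$ and concave on $(0,1]$, the graph of $h$ on any closed subinterval of $[0,1]$ lies above the chord (by approaching the endpoint $0$ from the interior and passing to the limit). Therefore $h$ is concave on the whole interval $[0,1]$, which is exactly log-concavity of $\zeta$.

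For the minimum, the inequality $h(t) = -t\ln t \geq 0$ on $[0,1]$ follows from $\ln t \leq 0$ there, so $\zeta(t)=e^{h(t)}\geq 1$. Equality requires either $t=0$ or $t\in(0,1]$ with $\ln t = 0$, i.e.\ $t=1$. I do not foresee any real obstacle; the only point that needs a word of care is the passage from concavity of $h$ on the open interval to concavity on the closed interval, and this is handled by continuity at $0$.
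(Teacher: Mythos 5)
Your proof is correct and follows essentially the same route as the paper: differentiate $t\ln t$ (equivalently $-t\ln t$) to get log-concavity on $(0,1]$, handle the endpoint $0$ by continuity, and verify $\zeta\geq 1$ with equality exactly at $0$ and $1$. Your extra care in extending concavity to the closed interval and in justifying the equality cases is a welcome (but minor) tightening of the paper's sketch.
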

\begin{proof}
By routine calculus, the second derivative of $t \ln t$ on $(0,1]$ is $\frac{1}{t}$, 
so $\zeta$ is log-concave on $(0,1]$. Clearly, $\zeta(t) > 1$ for all $t \in (0,1]$.
Moreover, $\zeta(t)$ converges monotonically to 1 as $t\to 0.$
\end{proof}

For a subset $S \subset \R^d,$ we denote by $\chi_S$ the indicator function
of $S,$ that is,
\[
\chi_S(x) 
=  
\begin{cases}
1,& x \in S,\\
0,& x \notin S.
\end{cases}
\] 
\begin{claim}
\label{claim:polar_of_log-linear}
For a vector $u \in \ball{d} \subset \R^d$ with $\enorm{u} < 1,$ we have 
\[
\polarset{\parenth{\ell_u}} 
= 
\frac{1}{\hballf(u)}\,
\exp\!\parenth{- \frac{\enorm{u}^2}{\hballf (u)^2}}
\,\chi_{\frac{u}{\hballf (u)^2}}.
\]
\end{claim}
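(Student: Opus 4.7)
The plan is a direct computation starting from the definition of the polar function. Since $\enorm{u} < 1$, the function $\ell_u$ is a nonzero affine exponential, in particular strictly positive on all of $\R^d$, so $\polarset{(\ell_u)}(p)$ is the infimum of $e^{-\iprod{p}{x}}/\ell_u(x)$ taken over every $x \in \R^d$.

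First, I would substitute the definition of $\ell_u$ and collect the linear terms in $x$ inside a single exponential. This gives
\[
\frac{e^{-\iprod{p}{x}}}{\ell_u(x)}
=
\frac{1}{\hballf(u)}\,\exp\!\parenth{-\frac{\enorm{u}^2}{\hballf^2(u)}}\,
\exp\!\parenth{\iprod{\tfrac{u}{\hballf^2(u)} - p}{x}},
\]
where I have separated out an $x$-independent prefactor by using $\iprod{u}{u} = \enorm{u}^2$ and rearranging signs.

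The key observation is that the infimum of $x \mapsto \exp(\iprod{v}{x})$ over $\R^d$ equals $0$ when $v \neq 0$ (push $x$ to $-\infty$ along $v$) and equals $1$ when $v = 0$. Applying this with $v = u/\hballf^2(u) - p$ yields that $\polarset{(\ell_u)}(p) = 0$ unless $p = u/\hballf^2(u)$, in which case the infimum equals the prefactor. This is exactly the scalar multiple of the indicator $\chi_{u/\hballf^2(u)}$ asserted in the claim.

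There is essentially no obstacle here beyond careful bookkeeping; the only subtlety is to notice that the support of the polar collapses to a single point because $\ell_u$ is a genuine log-affine function (no curvature), which forces the Legendre-type transform hidden in $f \mapsto f^\polar$ to produce a $\delta$-like indicator rather than a smooth function.
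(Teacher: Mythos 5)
Your computation is correct and is essentially the paper's proof: both pull the $x$-independent prefactor $\frac{1}{\hballf(u)}\exp\!\parenth{-\frac{\enorm{u}^2}{\hballf^2(u)}}$ out of the quotient and observe that $\inf_{x\in\R^d}\exp\!\parenth{\iprod{\frac{u}{\hballf^2(u)}-p}{x}}$ is $0$ unless $p=\frac{u}{\hballf^2(u)}$, where it equals $1$. The only cosmetic difference is that the paper treats $u=0$ as a separate (trivial) case, while your uniform computation handles it automatically.
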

\begin{proof}
If $u = 0,$ then $\polarset{\parenth{\ell_u}} = \chi_{\,u}.$ 
If $u \neq 0,$ it follows from the definition of the polar function that
\[
\polarset{\parenth{\ell_u}}(p) 
=  
\frac{1}{\hballf(u)}\,\exp\!\parenth{- \frac{\enorm{u}^2}{\hballf (u)^2}} \cdot
\inf_{x \in \R^d}  
\exp\! \parenth{- \iprod{x}{\,p - \frac{u}{\hballf (u)^2}}}.
\] 
Clearly, the above infimum is zero unless 
\(p = \frac{u}{\hballf (u)^2},\)
in which case it equals 1. This completes the proof of \Href{Claim}{claim:polar_of_log-linear}.
\end{proof}

\begin{lem}
\label{lem:polar_func_lower_bound}
Assume a bounded, non-negative function $g \colon \R^d \to [0, \infty)$ satisfies $\norm{g}_\infty \geq 1$ and ${g} \leq \ell_u$ for some  
$u \in \ball{d}$ with $0 < \enorm{u} < 1.$ 
Then  
\begin{equation}
\label{eq:bound_on_polar_contact_point}
\polarset{g}(u) \;\geq\; \frac{\polarset{g}(0)}{e}.
\end{equation}
\end{lem}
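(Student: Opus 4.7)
The plan is to translate the target lower bound on $\polarset{g}(u)$ into a pointwise upper bound on $g$. By \Href{Claim}{claim:norm_via_polar_function}, $\polarset{g}(0) = 1/\norm{g}_\infty$, so \eqref{eq:bound_on_polar_contact_point} is equivalent to showing
\[
g(x) \;\leq\; e\,\norm{g}_\infty\, e^{-\iprod{u}{x}} \quad \text{for every } x \in \supp g.
\]

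The main idea is to interpolate multiplicatively between the two available upper bounds on $g$, namely the flat bound $g \leq \norm{g}_\infty$ and the exponential bound $g \leq \ell_u$ supplied by the hypothesis. Since both are nonnegative, for any $\lambda \in [0,1]$ we have
\[
g(x) \;=\; g(x)^\lambda\, g(x)^{1-\lambda} \;\leq\; \norm{g}_\infty^\lambda\, \ell_u(x)^{1-\lambda}.
\]
I would take $\lambda = \enorm{u}^2$, so that $1-\lambda = \hballf^2(u)$ and the coefficient $(1-\lambda)/\hballf^2(u)$ multiplying $\iprod{u}{\,\cdot\,}$ in $\ell_u(x)^{1-\lambda}$ becomes exactly $1$. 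A short computation using the definition of $\ell_u$ then yields
\[
\ell_u(x)^{\hballf^2(u)} \;=\; \hballf(u)^{\hballf^2(u)}\, e^{\enorm{u}^2}\, e^{-\iprod{u}{x}}.
\]

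It then remains to check that the constant prefactor $\norm{g}_\infty^{\enorm{u}^2}\, \hballf(u)^{\hballf^2(u)}\, e^{\enorm{u}^2}$ is at most $e\,\norm{g}_\infty$. The estimate $\norm{g}_\infty \geq 1$ together with $\enorm{u}^2 \leq 1$ gives $\norm{g}_\infty^{\enorm{u}^2} \leq \norm{g}_\infty$, and $\enorm{u} \leq 1$ gives $e^{\enorm{u}^2} \leq e$. The only remaining piece is $\hballf(u)^{\hballf^2(u)} \leq 1$; writing $\hballf(u)^{\hballf^2(u)} = \hballf^2(u)^{\hballf^2(u)/2}$, this reduces to $t^t \leq 1$ for $t \in [0,1]$, which is exactly the inequality $\zeta(t) = t^{-t} \geq 1$ from \Href{Claim}{claim:technical_bump_func}. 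Combining the three estimates closes the proof.

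The only nontrivial step is spotting the correct interpolation weight $\lambda = \enorm{u}^2$; it is forced by the requirement that the linear exponent in $\ell_u^{1-\lambda}$ match $-\iprod{u}{x}$. Once this choice is made, everything else is elementary calculus plus \Href{Claim}{claim:technical_bump_func}, and the hypothesis $\norm{g}_\infty \geq 1$ is used in exactly one place, to absorb the factor $\norm{g}_\infty^{\enorm{u}^2}$ into $\norm{g}_\infty$.
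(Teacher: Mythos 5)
Your proof is correct. It is worth noting, though, that it is essentially the primal (pointwise) version of the paper's argument, which works entirely on the polar side: the paper uses $g \leq \ell_u$ to get $\polarset{g} \geq \polarset{\parenth{\ell_u}}$, computes $\polarset{\parenth{\ell_u}}$ explicitly at the single point $\frac{u}{\hballf^2(u)}$ (\Href{Claim}{claim:polar_of_log-linear}), and then interpolates by log-concavity of $\polarset{g}$ along the segment from $0$ to $\frac{u}{\hballf^2(u)}$, evaluating at $u$. Your multiplicative interpolation $g \leq \norm{g}_\infty^{\enorm{u}^2}\,\ell_u^{\hballf^2(u)}$ with weight $\lambda = \enorm{u}^2 = 1-\hballf^2(u)$ produces, after passing to polars, exactly the paper's intermediate inequality
\[
\polarset{g}(u) \;\geq\; \parenth{\hballf(u)}^{-\hballf^2(u)}\,\parenth{\polarset{g}(0)}^{1-\hballf^2(u)}\, e^{-\enorm{u}^2},
\]
and both proofs then finish with the same three estimates: $\parenth{\hballf(u)}^{-\hballf^2(u)} \geq 1$ via \Href{Claim}{claim:technical_bump_func}, $\enorm{u} < 1$, and $\norm{g}_\infty \geq 1$ (equivalently $\polarset{g}(0) \leq 1$). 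What your route buys is economy: it needs neither the computation of $\polarset{\parenth{\ell_u}}$ nor the log-concavity of the polar function, only the definition of $\polarset{g}$ and a pointwise H\"older-type splitting on $\supp g$; what the paper's route buys is that it stays in the polar language and reuses \Href{Claim}{claim:polar_of_log-linear}, which is needed again later (in the proof of \Href{Lemma}{lem:john_height_not_attained}), so no machinery is wasted. Both arguments are complete and yield the same constant.
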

\begin{proof}
By \Href{Claim}{claim:norm_via_polar_function} and since $g$ is bounded,
\(
0 < \polarset{g}(0) \leq 1.
\)
Since ${g} \leq \ell_u,$ we have $\polarset{g} \geq \polarset{\parenth{\ell_u}}.$ By log-concavity,
\[
\polarset{g}\!\!\parenth{\,t\,\frac{u}{\enorm{u}}}
\;\geq\;
\parenth{\polarset{\parenth{\ell_u}}\!\parenth{\frac{u}{\hballf (u)^2}}}^{\frac{\hballf (u)^2}{\enorm{u}}\,t}
\,
\parenth{\polarset{g}(0)}^{\,1 - \frac{\hballf (u)^2}{\enorm{u}}\,t}
\]
for all $t \in \brackets{0, \frac{\enorm{u}}{\hballf (u)^2}}.$
By \Href{Claim}{claim:polar_of_log-linear},
\[
\polarset{g}\!\!\parenth{\,t\,\frac{u}{\enorm{u}}} 
\;\geq\; 
\parenth{\hballf(u)}^{-\frac{\hballf (u)^2}{\enorm{u}}\,t}
\cdot 
\parenth{\,\polarset{g}(0)}^{\,1 - \frac{\hballf (u)^2}{\enorm{u}}\,t}
\cdot 
e^{- \enorm{u}\,t}
\]
for all $t \in \brackets{0, \frac{\enorm{u}}{\hballf (u)^2}}.$
Choosing $t = \enorm{u}$ (note $\enorm{u} \leq \frac{\enorm{u}}{\hballf (u)^2})$, we obtain
\[
\polarset{g}(u)  
\;\geq\; 
\parenth{\hballf(u)}^{-\hballf (u)^2}
\,\parenth{\,\polarset{g}(0)}^{\,1 - \hballf (u)^2} 
\cdot 
e^{- \enorm{u}^2}.
\]
Next, by \Href{Claim}{claim:technical_bump_func}, 
\[
\parenth{\hballf(u)}^{-\hballf (u)^2}
\;=\;
\sqrt{\,\parenth{\hballf (u)^2}^{-\hballf (u)^2}} 
\;>\; 1.
\]
Combining this with $\enorm{u} < 1,$ we get
\[
\polarset{g}(u)  
\;\geq\;
\parenth{\,\polarset{g}(0)}^{\,1 - \hballf (u)^2}
\cdot 
e^{-1}.
\]
By \Href{Claim}{claim:norm_via_polar_function}, $\polarset{g}(0) \leq 1,$ so
\[
\polarset{g}(0)^{\,1 - \hballf (u)^2}
\;\geq\;
\polarset{g}(0).
\]
Hence \eqref{eq:bound_on_polar_contact_point} follows.
\end{proof}

\section{Properties of decompositions of the identity}
In this section, we derive several properties of John's decompositions of the identity that are needed for the proof of \Href{Theorem}{thm:john_inclusion_one_f}.  
We begin by establishing a relaxed version of the classical John's inclusion \eqref{eq:johninclusion}.  
Then we show that it suffices to consider only everywhere positive functions, thereby allowing us to disregard the case of unit vectors in John's decompositions for functions.  
This is a purely technical observation that simplifies the subsequent arguments.  
Recall that in the case $\enorm{u}=1$, the corresponding function $\ell_u$ is unbounded on a half-space.

\begin{lem}\label{lem:John_inclusion_relaxed_case}
Let vectors ${u}_1,\ldots,{u}_m \in \ball{d}\subset \Red$ and 
positive weights $c_1,\ldots,c_m$ satisfy
\[
\sum\limits_{i \in [m]} c_i {u}_i\otimes {u}_i = \id_d
\quad \text{and} \quad \sum\limits_{i \in [m]} c_i u_i =0.
\]
Denote by $K$ the convex hull of ${u}_1,\ldots,{u}_m$. 
Then 
\[
{\ball{d}} \subset   \sum\limits_{i \in [m]} c_i \cdot K.
\]
\end{lem}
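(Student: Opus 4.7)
The plan is to argue by contradiction, reducing the inclusion to a one-dimensional scalar inequality along a separating direction. Set $C = \sum_{i \in [m]} c_i$ and suppose, toward a contradiction, that there exists $x \in \ball{d}$ with $x \notin C K$. Since $CK$ is a nonempty closed convex subset of $\R^d$, Hahn--Banach separation supplies a unit vector $v \in \R^d$ with
\[
\iprod{v}{x} \;>\; \sup_{y \in CK} \iprod{v}{y} \;=\; C \,\max_{i \in [m]} \iprod{v}{u_i}.
\]
Because $\enorm{x} \leq 1$, this forces $C \cdot M < 1$, where $M := \max_{i} \iprod{v}{u_i}$. The rest of the argument is devoted to deriving the opposite inequality.

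Next I would pair the two decomposition identities against $v$. Writing $a_i := \iprod{v}{u_i} \in [-1, 1]$, the identity $\sum c_i u_i = 0$ becomes $\sum c_i a_i = 0$, while evaluating the matrix identity as $\iprod{v}{\parenth{\sum c_i\,u_i \otimes u_i}\,v}$ yields $\sum c_i a_i^2 = \enorm{v}^2 = 1$. These two moment identities force some $a_i$ to be strictly positive and some strictly negative, so setting $\mu := -\min_{i} a_i$ one has $0 < \mu \leq 1$ and $M > 0$.

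The crux is an elementary quadratic bound: since $-\mu \leq a_i \leq M$ for every $i$, the product $(M - a_i)(a_i + \mu)$ is nonnegative. Expanding and taking the $c_i$-weighted sum, the linear term vanishes by $\sum c_i a_i = 0$, and the second-moment identity gives
\[
1 \;=\; \sum_{i} c_i a_i^2 \;\leq\; M\mu \sum_{i} c_i \;=\; C M \mu.
\]
Combined with $\mu \leq 1$ (which is where $u_i \in \ball{d}$ is used), this produces $CM \geq 1$, contradicting $CM < 1$.

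I do not expect any serious obstacle. The two decomposition identities fix the first two $c_i$-weighted moments of the scalars $a_i$, and the auxiliary quadratic $(M-a_i)(a_i+\mu)$ is the natural device that converts those two moments, together with the range $[-\mu, M]$, into the sharp inequality $C M \mu \geq 1$; the final bound $\mu \leq 1$ coming from $u_i \in \ball{d}$ is precisely what produces the dilation factor $C$ (and nothing larger) in the conclusion.
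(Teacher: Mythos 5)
Your proof is correct, but it follows a genuinely different route from the paper. The paper argues directly and ``primally'': using $\sum_i c_i u_i = 0$ it rewrites
\[
-x \;=\; \sum_{i \in [m]} c_i\parenth{\enorm{x} - \iprod{u_i}{x}}\,u_i ,
\]
observes that the coefficients are nonnegative (since $u_i \in \ball{d}$ gives $\enorm{x} \ge \iprod{u_i}{x}$) and sum to $\parenth{\sum_i c_i}\enorm{x}$, and concludes $-x \in \parenth{\sum_i c_i}\enorm{x}\,K \subset \parenth{\sum_i c_i} K$ for $\enorm{x}\le 1$; no separation is needed, and the argument even yields the slightly finer scaling $x \in C\enorm{x}K$ with $C=\sum_i c_i$. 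You instead work ``dually'': strict separation of a putative point $x \in \ball{d}\setminus CK$ gives a unit direction $v$, and the two decomposition identities become the scalar moment conditions $\sum_i c_i a_i = 0$, $\sum_i c_i a_i^2 = 1$ for $a_i = \iprod{v}{u_i}$; the nonnegativity of $(M-a_i)(a_i+\mu)$ then yields $C M \mu \ge 1$, and $\mu \le 1$ (from $u_i \in \ball{d}$) gives $CM \ge 1$, contradicting $CM < 1$. This is essentially the classical support-function proof of John's inclusion with $C$ playing the role of $d$, and it buys a quantitative by-product the paper's proof does not state explicitly, namely $\max_i\iprod{v}{u_i}\cdot\parenth{-\min_i\iprod{v}{u_i}} \ge 1/C$ for every unit $v$; the paper's computation is shorter and avoids the contradiction/separation machinery. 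All your steps check out: the separation is legitimate because $CK$ is compact and convex, the moment identities do force $M>0$ and $0<\mu\le 1$, and the weighted expansion of the quadratic kills the linear term exactly as claimed.
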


\begin{proof}
Since ${u}_1,\ldots,{u}_m \in \ball{d},$ we know
\begin{equation}
\label{eq:glmp_inclusion}
\enorm{x} \geq \max\limits_{i \in [m]} \iprod{x}{u_i}
\quad\text{for all } x \in \Red.
\end{equation}
Hence, 
\[
-x  = - \id_d (x) 
= -\sum\limits_{i \in [m]} c_i \iprod{u_i}{x} u_i 
\stackrel{(\ast)}{=}
\enorm{x} \sum\limits_{i \in [m]} c_i u_i 
-\sum\limits_{i \in [m]} c_i \iprod{u_i}{x} u_i 
= 
\sum\limits_{i \in [m]} c_i \parenth{\enorm{x} - \iprod{u_i}{x}} u_i,
\]
where in $(\ast)$ we used $\sum\limits_{i \in [m]} c_i u_i = 0.$

By \eqref{eq:glmp_inclusion}, $\enorm{x} - \iprod{u_i}{x}$ is nonnegative for every $i \in [m]$. Thus,
\[
-x 
\;\in\; \sum\limits_{i \in [m]}  
c_i\,\parenth{\enorm{x} - \iprod{u_i}{x}}\,K 
\;=\;
\parenth{\sum\limits_{i \in [m]} c_i}\,\enorm{x}\,K 
\;-\; 
\parenth{\iprod{\sum\limits_{i \in [m]} c_i u_i}{x}}\,K.
\]
Using the identities $\sum\limits_{i \in [m]} c_i u_i=0$ and
denoting $C_{\text{tr}} = \sum\limits_{i \in [m]} c_i,$ 
we get 
\[
-x  \in C_{\text{tr}}\;\enorm{x}\;K.
\]
In particular, if $\enorm{x} \leq 1$ (that is, $-x \in \ball{d}$), it follows that $-x \in C_{\text{tr}} K.$ The lemma now follows.
\end{proof}

\begin{cor}\label{cor:john_inclusion_dec_for_f}
Assume vectors 
${u}_1,\ldots,{u}_m \in \ball{d}\subset \Red$ and 
positive weights $c_1, \ldots ,c_m$ \emph{form  John's decomposition of the identity for functions}.  
Then $\sum\limits_{i \in [m]} c_i = d+1$ and the convex hull $K$ of ${u}_1,\ldots,{u}_m$ contains the ball $\frac{\ball{d}}{d+1}.$
\end{cor}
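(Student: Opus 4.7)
The plan is to combine the three conditions of a John decomposition of the identity for functions with the already-established \Href{Lemma}{lem:John_inclusion_relaxed_case}. Conditions (1) and (3) give exactly the hypotheses of that lemma, so the inclusion $\ball{d} \subset \parenth{\sum c_i} K$ is immediate; the only genuine work is to compute $\sum c_i$ and show it equals $d+1$.

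For the sum of weights, I would take the trace of the operator identity in condition (1). Since $\mathrm{tr}\parenth{u_i \otimes u_i} = \enorm{u_i}^2$ and $\mathrm{tr}(\id_d) = d,$ this gives
\[
\sum_{i \in [m]} c_i \enorm{u_i}^2 \;=\; d.
\]
Condition (2) states $\sum c_i\,\hballf^2(u_i) = 1,$ and by definition $\hballf^2(u_i) = 1 - \enorm{u_i}^2$ since $u_i \in \ball{d}.$ Adding these two identities yields $\sum c_i = d+1,$ as claimed.

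With $\sum c_i = d+1$ in hand, \Href{Lemma}{lem:John_inclusion_relaxed_case} applied to $u_1,\dots,u_m$ and $c_1,\dots,c_m$ (whose hypotheses are exactly conditions (1) and (3) of the John decomposition) gives $\ball{d} \subset (d+1)\,K,$ which rearranges to $\frac{\ball{d}}{d+1} \subset K.$

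There is no real obstacle here: the corollary is essentially a two-line computation followed by invocation of the preceding lemma. The only subtlety is making sure one recognizes that condition (2) of the functional John decomposition, together with the trace of condition (1), is precisely what pins down the total mass of the decomposition at $d+1$ — one more than the classical value $d$ obtained from the trace of \eqref{eq:johncond} alone, reflecting the ``extra dimension'' coming from the height function $\hballf.$
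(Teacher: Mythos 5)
Your proof is correct and follows exactly the paper's own argument: take the trace of condition (1), add condition (2) using $\hballf^2(u_i)=1-\enorm{u_i}^2$ to get $\sum c_i = d+1$, then invoke \Href{Lemma}{lem:John_inclusion_relaxed_case}. Nothing to add.
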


\begin{proof}
Taking traces in the first equation from the definition of  John's decomposition of the identity for functions and adding the second, 
we conclude $\sum\limits_{i \in [m]} c_i = d+1.$
Then, by \Href{Lemma}{lem:John_inclusion_relaxed_case}, $K$ contains the ball $\frac{\ball{d}}{d+1}.$
\end{proof}

\subsection{Reduction to everywhere positive functions}
There are some technical difficulties in the case of ``contact'' at the boundary of the unit ball. There are several ways to circumvent these; we choose to employ a certain limit argument.

\begin{dfn} 
Assume vectors 
${u}_1,\ldots,{u}_m \in \ball{d}\subset \Red$ and 
positive weights $c_1, \ldots ,c_m$ form  {John's decomposition of the identity for functions}. 
We call a function ${f}$ of the form 
\[
{f} = \min\limits_{i \in [m]} \ell_{u_i}
\]
a \emph{John bump function}. If all points ${u}_1,\ldots,{u}_m$ additionally lie in the interior of the unit ball, we call 
${f}$ a \emph{regular John bump function}.
\end{dfn}

By \Href{Proposition}{prp:johncondfunc}, $\hballf$ is the John function of any John bump function.

\begin{lem}\label{lem:reduction_to_positive_f}
Fix a point $p \in \R^d$ and a non-negative constant $\alpha.$
The following assertions are equivalent:
\begin{enumerate}
\item\label{ass:reduction_to_positive_f_1} For every proper log-concave function $f$ on $\R^d$ with $\hballf$ as its John function, the inequality
\[
 \alpha \;\leq\; \polarset{f}(p)
\] 
holds.
\item\label{ass:reduction_to_positive_f_2} The same inequality 
\[
 \alpha \;\leq\; \polarset{f}(p)
\] 
holds for every \emph{regular} John bump function $f$.
\end{enumerate}
\end{lem}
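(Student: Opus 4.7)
The implication \eqref{ass:reduction_to_positive_f_1}$\Rightarrow$\eqref{ass:reduction_to_positive_f_2} reduces to the assertion that every regular John bump function $f=\min_i\ell_{u_i}$ is itself a proper log-concave function with $\hballf$ as its John function, so that \eqref{ass:reduction_to_positive_f_1} applied to this $f$ is exactly \eqref{ass:reduction_to_positive_f_2}. Log-concavity of the pointwise minimum of log-affine functions is automatic; applying \Href{Corollary}{cor:touchingballbound} with $g=\hballf$ at each $u_i$ yields $\hballf\leq\ell_{u_i}$, hence $\hballf\leq f$. The direct computation $\ell_{u_i}(u_i)=\hballf(u_i)$ exhibits every $u_i$ as an interior contact point, so \Href{Proposition}{prp:johncondfunc} identifies $\hballf$ as the John function of $f$. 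Properness (finite, positive integral) follows from \Href{Corollary}{cor:john_inclusion_dec_for_f}: the $u_i$ positively span $\R^d$, which forces $-\log f$ to grow at least linearly in every direction.

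For \eqref{ass:reduction_to_positive_f_2}$\Rightarrow$\eqref{ass:reduction_to_positive_f_1}, I would fix a proper log-concave $f$ with $\hballf$ as its John function, choose a John decomposition of contact points $u_1,\ldots,u_m$ via \Href{Proposition}{prp:johncondfunc}, and set $\tilde f=\min_i\ell_{u_i}$. I claim $f\leq\tilde f$. For each interior contact $u_i$, \Href{Corollary}{cor:touchingballbound} applied to $f$ itself gives $f\leq\ell_{u_i}$ directly. For a boundary contact $u_i\in\partial\ball{d}\cap\partial\supp f$, the inclusion $\ball{d}\subset\supp f$ forces the unique supporting hyperplane of $\ball{d}$ at $u_i$, namely $\{x\st\iprod{u_i}{x}=1\}$, to also support $\supp f$ at $u_i$; thus $\supp f\subset\{x\st\iprod{u_i}{x}\leq 1\}=\supp\ell_{u_i}$ and $f\leq\ell_{u_i}$ there as well. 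Taking the minimum, $f\leq\tilde f$ and so $\polarset{\tilde f}(p)\leq\polarset f(p)$. When $\tilde f$ happens to be a regular John bump function, \eqref{ass:reduction_to_positive_f_2} immediately yields $\alpha\leq\polarset{\tilde f}(p)\leq\polarset f(p)$.

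In the non-regular case I would approximate $\tilde f$ by regular John bump functions. For each small $\epsilon>0$ replace every boundary contact $u_i$ by the interior point $u_i^\epsilon=(1-\epsilon)u_i$, keep the interior contacts fixed, and adjust the weights $c_i^\epsilon$—if necessary augmenting the family with a fixed finite number of interior auxiliary points carrying zero weight at $\epsilon=0$—so that $\{u_i^\epsilon,c_i^\epsilon\}$ forms a valid John decomposition of the identity for functions. The resulting $\tilde f_\epsilon=\min_i\ell_{u_i^\epsilon}$ is then a regular John bump function, and \eqref{ass:reduction_to_positive_f_2} yields $\alpha\leq\polarset{\tilde f_\epsilon}(p)$. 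A short direct estimate shows $\limsup_{\epsilon\to 0^+}\polarset{\tilde f_\epsilon}(p)\leq\polarset{\tilde f}(p)$: at each $x$ in the interior of $\supp\tilde f$ the values $\tilde f_\epsilon(x)$ converge to $\tilde f(x)$, whereas for $x$ outside $\supp\tilde f$ they decay like $\sqrt{\epsilon}\exp(-c(x)/\epsilon)$ and contribute only arbitrarily large terms to the infimum defining $\polarset{\tilde f_\epsilon}(p)$. Passing to the limit gives $\alpha\leq\polarset f(p)$.

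The main obstacle I expect is making the perturbation step rigorous. Pushing a single boundary $u_i$ to $(1-\epsilon)u_i$ creates simultaneous $O(\epsilon)$ shifts in the centroid identity $\sum c_iu_i=0$ and in the trace identity $\sum c_i\hballf^2(u_i)=1$ (where $\hballf^2$ jumps from $0$ to $\approx 2\epsilon$ at the perturbed point), both of which must be absorbed without disturbing the operator identity $\sum c_iu_i\otimes u_i=\id_d$. Preemptively augmenting the original decomposition with a handful of interior auxiliary points whose weights vanish at $\epsilon=0$ should supply the open set of free parameters required for a standard implicit-function argument to continue the decomposition into small $\epsilon>0$; once that is available, the polar convergence above is essentially routine from the explicit form of $\ell_{u_i^\epsilon}$.
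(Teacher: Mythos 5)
Your first implication and the reduction $f\leq\tilde f=\min_i\ell_{u_i}$ (with the supporting-hyperplane argument at boundary contacts) are fine and essentially match the paper. The genuine gap is in the approximation of a non-regular John bump function by regular ones, which is exactly the step the paper's proof is built around. Two problems. First, pushing the boundary contacts to $(1-\epsilon)u_i$ and ``adjusting the weights'' must restore all three identities $\sum c_i u_i\otimes u_i=\id_d$, $\sum c_i\hballf^2(u_i)=1$, $\sum c_i u_i=0$; with only the original contact points this is in general impossible (the weight-to-moment map need not be locally surjective), and your remedy --- auxiliary interior points carrying weight $0$ at $\epsilon=0$ --- is not rescued by the implicit function theorem, since the continued weights must stay \emph{positive} and IFT gives no sign control at a weight that starts at zero. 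Second, and more fatally, even if such an augmented decomposition existed, a John bump function is the minimum of $\ell_u$ over \emph{all} points of the decomposition, so every auxiliary point $v_j$ contributes the factor $\ell_{v_j}$ for every $\epsilon>0$, no matter how small its weight. Hence $\tilde f_\epsilon\to\tilde f\wedge\min_j\ell_{v_j}$, which can be strictly smaller than $\tilde f$ (one cannot in general choose $v_j$ with $\ell_{v_j}\geq\tilde f$, since $\tilde f$ may exceed $1$ while e.g.\ $\ell_0\equiv 1$). Your limit estimate then only yields $\limsup_\epsilon\polarset{\tilde f_\epsilon}(p)\leq\polarset{\bigl(\tilde f\wedge\min_j\ell_{v_j}\bigr)}(p)$, and since a smaller function has a \emph{larger} polar, this sits on the wrong side of $\polarset{\tilde f}(p)$; moreover $f\not\leq\tilde f\wedge\min_j\ell_{v_j}$ in general, so the chain back to $\polarset{f}(p)$ breaks.

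The paper avoids all of this with a different construction: lift the decomposition to $\R^{d+1}$, sending an interior $u_i$ to the two unit vectors $(u_i,\pm\hballf(u_i))$ with weights $c_i/2$ and a boundary $u_i$ to $(u_i,0)$ with weight $c_i$; the three functional identities become a classical John decomposition $\sum\tilde c\,\bar u\otimes\bar u=\id_{d+1}$, $\sum\tilde c\,\bar u=0$ with all points on the unit sphere of $\R^{d+1}$. A small rotation about a hyperplane $H_{d-1}\subset\R^d$ avoiding the equatorial points, followed by orthogonal projection back to $\R^d$, preserves these identities exactly (the last coordinate squared of a rotated unit vector is precisely $\hballf^2$ of its projection), uses no new points or weights, makes every projected point lie strictly inside $\ball{d}$, and lets the resulting regular John bump functions hypo-converge to the given one, after which the polar inequality passes to the limit as in your final step. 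If you want to salvage your scheme, you would need to replace the auxiliary-point device by some construction of this kind in which the perturbed decomposition uses only points converging to the original ones.
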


\begin{proof}
If $\alpha = 0$, there is nothing to prove, so assume $\alpha>0$.

By \Href{Proposition}{prp:johncondfunc} and \Href{Corollary}{cor:touchingballbound}, any proper log-concave function $f$ on $\R^d$ for which $\hballf$ is the John function lies below some John bump function $\tilde{f}.$ Consequently, $\tilde{f}$ is a proper log-concave function satisfying $\polarset{f} \geq \polarset{\tilde{f}}$. 
Hence, \eqref{ass:reduction_to_positive_f_1} is equivalent to:
\\ 
\noindent
\emph{``The inequality $\alpha \leq \polarset{f}(p)$ holds for every John bump function $f$.''}

This in turn clearly implies \eqref{ass:reduction_to_positive_f_2}. 
Thus, it suffices to prove \eqref{ass:reduction_to_positive_f_2} $\Rightarrow$ \eqref{ass:reduction_to_positive_f_1}.

By standard convex analysis (cf.\ \cite[Chapter 7]{rockafellar2009variational}), it is enough to construct a sequence $\{f_n\}$ of \emph{regular} John bump functions hypo-convergent to a given John bump function $f$, i.e.,
\[
\limsup_{n \to \infty} f_n(x_n) \leq f(x) 
\quad\text{for every } x_n \to x,
\]
and
\[
\liminf_{n \to \infty} f_n(x_n) \geq f(x) 
\quad\text{for some } x_n \to x.
\]

\noindent
Let us construct such a sequence.
Take vectors $u_1,\ldots,u_m$ and weights $c_1,\ldots,c_m$ forming  John's decomposition of the identity for functions, so that 
\[
f = \min\limits_{i \in [m]} \ell_{u_i}.
\]
For these vectors $u_1, \dots, u_m$ and weights $c_1, \dots, c_m,$ define a set of vectors $\bar{U} \subset \R^{d+1}$ and an associated multi-set of weights $\tilde{C}$ as follows:
\begin{enumerate}
\item If $\enorm{u_i} < 1,$ add the vectors
$(u_i,  \pm \hballf( u_i)) \in \R^{d+1}$ to $\bar{U}$, each with weight
$\frac{c_i}{2};$
\item If $\enorm{u_i} = 1,$ add the vector
$(u_i,  0) \in \R^{d+1}$ to $\bar{U}$ with weight
$c_i.$
\end{enumerate}
Since the original vectors and weights form  John's decomposition of the identity for functions in $\R^d$, the vectors in $\bar{U}$ and weights in $\tilde{C}$ satisfy the $(d+1)$-dimensional version of \eqref{eq:johncond}, namely
\[
\sum \tilde{c}\,{\bar{u} \otimes \bar{u}}= \id_{d+1}
\quad \text{and} \quad 
\sum \tilde{c}\,\bar{u}  = 0,
\]
where the sums run over $\bar{u} \in \bar{U}$ and associated weights $\tilde{c} \in \tilde{C}.$

A key observation is that these equations are invariant under orthogonal transformations. 
Let $H_{d-1}$ be a linear hyperplane in $\R^d$ avoiding $\bar{U}\cap \R^d.$ 
Denote by $P_d$ the orthogonal projection of $\R^{d+1}$ onto $\R^d,$
and let $O_n$ denote the rotation around $H_{d-1}$ by angle $\frac{1}{n}$ in a fixed direction. 
Define
\[
U_n = P_d \circ O_n \parenth{\bar{U}}.
\]
For each natural $n$, the vectors in $U_n$ together with the same weights $\tilde{C}$ form  John's decomposition of the identity for functions in $\R^d$. 

For sufficiently large $n$, all vectors in $U_n$ lie strictly inside the unit ball $\ball{d}$. Consequently, the functions
\[
f_n = \min\limits_{u \in U_n} \ell_{u}
\]
are \emph{regular} John bump functions. By a standard limit argument, they hypo-converge to $f$. This completes the proof.
\end{proof}
\section{Inequalities for John's function}

\subsection{John's inclusion for log-concave functions} 

\Href{Theorem}{thm:john_inclusion_one_f} is a direct consequence of the following:

\begin{thm}
\label{thm:John_inclusion_hballf}
Let $f$ be a proper  log-concave function on $\Red$ such that 
$\hballf$ is its John function. Then
\[
   e^{-(d+1)} \cdot \chi_{\frac{\ball{d}}{d+1}} \;\leq\; \polarset{f}.
\]
\end{thm}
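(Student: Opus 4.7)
The plan is to use \Href{Lemma}{lem:reduction_to_positive_f} to reduce to a \emph{regular} John bump function, apply the pointwise bound of \Href{Lemma}{lem:polar_func_lower_bound} at the contact vectors, and then propagate the resulting estimate to the whole ball $\frac{\ball{d}}{d+1}$ via log-concavity of $\polarset{f}$ and \Href{Corollary}{cor:john_inclusion_dec_for_f}.

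More precisely, for each fixed $p \in \frac{\ball{d}}{d+1}$, \Href{Lemma}{lem:reduction_to_positive_f} tells us that establishing the desired inequality $\polarset{f}(p) \geq e^{-(d+1)}$ for every proper log-concave $f$ with $\hballf$ as its John function is equivalent to establishing it for every regular John bump function. Thus I fix such an $f$, writing $f = \min_{i \in [m]} \ell_{u_i}$ where the vectors $u_1, \dots, u_m \in \mathrm{int}(\ball{d})$ with weights $c_1, \dots, c_m$ form a John decomposition of the identity for functions.

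For each index $i$ with $u_i \neq 0$, the hypotheses of \Href{Lemma}{lem:polar_func_lower_bound} are met: $f \leq \ell_{u_i}$ holds by construction, and $\norm{f}_\infty \geq 1$ because $\hballf \leq f$ and $\hballf(0) = 1$. The lemma then gives $\polarset{f}(u_i) \geq \polarset{f}(0)/e$, and this bound holds trivially when $u_i = 0$. By \Href{Claim}{claim:norm_via_polar_function} combined with \Href{Proposition}{prp:norm_in_john_position}, one has $\polarset{f}(0) = 1/\norm{f}_\infty \geq e^{-d}$, so
\[
\polarset{f}(u_i) \;\geq\; e^{-(d+1)} \quad \text{for every } i \in [m].
\]

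To conclude, I use that $\polarset{f}$ is log-concave, being an infimum of log-affine functions. Hence for every $x$ in the convex hull $K$ of $\{u_1, \dots, u_m\}$, writing $x = \sum \lambda_i u_i$ as a convex combination, one obtains $\polarset{f}(x) \geq \prod_i \polarset{f}(u_i)^{\lambda_i} \geq e^{-(d+1)}$. \Href{Corollary}{cor:john_inclusion_dec_for_f} yields $\frac{\ball{d}}{d+1} \subseteq K$, and the proof is complete. The main obstacle I anticipate is bookkeeping the reduction step correctly, since \Href{Lemma}{lem:reduction_to_positive_f} must be invoked for each target point $p$ in $\frac{\ball{d}}{d+1}$ with the single uniform constant $\alpha = e^{-(d+1)}$; once this is clear, the three ingredients — the contact inequality, the norm bound, and log-concavity of the polar — combine cleanly.
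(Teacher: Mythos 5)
Your proposal is correct and follows essentially the same route as the paper's proof: reduce to regular John bump functions via Lemma~\ref{lem:reduction_to_positive_f}, bound $\polarset{f}(u_i)$ using Lemma~\ref{lem:polar_func_lower_bound} together with $\polarset{f}(0)\geq e^{-d}$ from Proposition~\ref{prp:norm_in_john_position} and Claim~\ref{claim:norm_via_polar_function}, and propagate by log-concavity over the convex hull, which contains $\frac{\ball{d}}{d+1}$ by Corollary~\ref{cor:john_inclusion_dec_for_f}. Your explicit handling of the case $u_i=0$ and of the pointwise nature of the reduction lemma are minor clarifications of the same argument.
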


\begin{proof}
By \Href{Lemma}{lem:reduction_to_positive_f}, it suffices to consider the case of a {regular} John bump function.  Thus, assume 
\[
f = \min\limits_{i \in [m]} \ell_{u_i},
\]
where each $u_i$ lies in the interior of $\ball{d}$ and $c_1,\ldots,c_m$ are the associated weights from  John's decomposition of the identity for functions. 

Since $\hballf \le f$, \Href{Proposition}{prp:norm_in_john_position} and \Href{Claim}{claim:norm_via_polar_function} imply
\[
e^{-d} \;\leq\; \polarset{f}(0) \;\leq\; 1.
\]
Using \eqref{eq:bound_on_polar_contact_point} of \Href{Lemma}{lem:polar_func_lower_bound}, 
\[
\polarset{f}(u_i) \;\geq\; \frac{\polarset{f}(0)}{e} \;\geq\; e^{-(d+1)}.
\]
By log-concavity, $\polarset{f}$ is at least $e^{-(d+1)}$ throughout the convex hull $K$ of the $u_i$'s.  By 
\Href{Corollary}{cor:john_inclusion_dec_for_f}, $K$ contains $\frac{\ball{d}}{d+1}.$
Hence, $\polarset{f}$ remains at least $e^{-(d+1)}$ on $\frac{\ball{d}}{d+1}.$ 
This completes the proof of \Href{Theorem}{thm:John_inclusion_hballf}.
\end{proof}

\begin{rem}
We note that the original John's inclusion~\eqref{eq:johninclusion} can be derived using our approach.
Indeed, if the unit ball $\ball{d}$ is the John ellipsoid of a convex body $K$, then
$\hballf$ is the John function of the indicator function of $K$.
Hence, the non-zero vectors from the corresponding John's decomposition of the identity for functions
satisfy~\eqref{eq:johncond}.
Therefore, the quantity $C_{\text{tr}}$ in \Href{Lemma}{lem:John_inclusion_relaxed_case} is equal to $d$,
and the desired inclusion follows.
\end{rem}

\Href{Lemma}{lem:flat_functions_inclusion} follows from \Href{Theorem}{thm:John_inclusion_hballf}:
\begin{proof}[Proof of \Href{Lemma}{lem:flat_functions_inclusion}]
Without loss of generality, place $f$ so that 
$\hballf$ is its John function.  
By \Href{Theorem}{thm:John_inclusion_hballf}, we get
\[
 \hballf \;\leq\; f  \;\leq\; e^{-\frac{\enorm{x}}{d+1} + (d+1)}.
\]
Also,  \(\frac{1}{\sqrt{d+1}}\,\chi_{\sqrt{\frac{d}{d+1}}\,\ball{d}} \;\leq\; \hballf.\) 
Combining these two inequalities, we see that 
\[
\tilde{f} 
= 
\sqrt{d+1}\,\cdot\, f \circ \brackets{\sqrt{\frac{d}{d+1}}\,\id_d}
\]
satisfies 
\[
\chi_{\ball{d}} 
\;\leq\; 
\tilde{f} 
\;\leq\; 
\sqrt{d+1}\,\cdot\,e^{-\sqrt{\frac{d}{d+1}}\;\frac{\enorm{x}}{d+1} + (d+1)}.
\]
The lemma follows from the elementary inequality 
\[
\sqrt{\frac{d}{d+1}}\;\frac{1}{d+1}  \;>\; \frac{1}{d+2},
\]
valid for any natural \(d\).
\end{proof}

\subsection{What is the optimal bound on the height?}
\label{subsec:height_ball_john}
\begin{lem}\label{lem:john_height_not_attained}
There is a positive constant \(\epsilon_d\) such that the following holds:
If \(f\) is a proper log-concave function on \(\Red\) with \(\hballf\) as its John function, then 
\[
\norm{f}_{\infty} \;\leq\; e^{d} - \epsilon_d.
\]
\end{lem}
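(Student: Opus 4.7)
My plan is to refine the proof of \Href{Proposition}{prp:norm_in_john_position} by extracting the quantitative content of \Href{Lemma}{lem:polar_func_lower_bound}, and then to use compactness to upgrade strict inequality to a uniform gap. By \Href{Proposition}{prp:johncondfunc} and \Href{Corollary}{cor:touchingballbound}, every $f$ with $\hballf$ as its John function lies below a John bump function $\tilde{f} = \min_i \ell_{u_i}$ associated to some John decomposition $(u_i, c_i)$; since $\tilde{f}$ itself has $\hballf$ as its John function (its contact points are exactly the $u_i$'s), $\|f\|_\infty \leq \|\tilde{f}\|_\infty$, so it suffices to bound $\|\tilde{f}\|_\infty$.

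The key inequality I would prove is
\[
\log \polarset{\tilde{f}}(0) \;\geq\; -d \;+\; \tfrac12 \sum_i c_i (1-s_i)\bigl(-\log(1-s_i)\bigr), \qquad s_i := |u_i|^2.
\]
Its proof extracts from the proof of \Href{Lemma}{lem:polar_func_lower_bound} the stronger pointwise bound $\polarset{\tilde{f}}(u_i) \geq (1-s_i)^{-(1-s_i)/2}\,\polarset{\tilde{f}}(0)^{s_i}\, e^{-s_i}$, which the argument establishes \emph{before} the two simplifications of the last paragraph; taking logarithms, averaging with weights $c_i/(d+1)$, using log-concavity of $\polarset{\tilde{f}}$ at $0 = \sum_i (c_i/(d+1))u_i$, and invoking the identities $\sum c_i = d+1$ and $\sum c_i s_i = d$ gives the claim by elementary rearrangement. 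Now set $G := \tfrac12 \sum_i c_i (1-s_i)(-\log(1-s_i))$. Since $t(-\log t)$ is nonnegative on $[0,1]$ and vanishes only at $t\in\{0,1\}$, either $G > 0$, in which case $\|\tilde{f}\|_\infty \leq e^{d-G} < e^d$; or $G = 0$, which forces every $s_i \in \{0,1\}$. In the latter case $\sum c_i (1-s_i) = 1$ yields $\sum_{s_i = 0} c_i = 1 > 0$, so $u = 0$ is a contact point with $c_i > 0$, $\ell_0 \equiv 1$ appears among the $\ell_{u_i}$, and $\|\tilde{f}\|_\infty \leq 1 < e^d$.

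To upgrade this individual strict inequality to a uniform gap $\epsilon_d$, I would use compactness: by a Carathéodory reduction (a John decomposition for functions satisfies finitely many affine equations), the number of contact points can be assumed bounded by some $M(d)$, and the parameter space $(u_i, c_i) \in (\overline{\ball{d}})^{M(d)} \times [0, d+1]^{M(d)}$ subject to the decomposition equations is compact. The map $(u_i, c_i) \mapsto \|\min_i \ell_{u_i}\|_\infty$ is continuous on this set, so its supremum is attained at some configuration, and by the case analysis above the attained value is strictly less than $e^d$; setting $\epsilon_d := e^d - \sup$ gives the conclusion. The main obstacle I anticipate is verifying continuity up to the boundary of the parameter space, where some $|u_i| \to 1$ and $\ell_{u_i}$ degenerates into a half-space indicator: one must argue that $\|\min_i \ell_{u_i}\|_\infty$ varies continuously through such limits, which is exactly what the hypo-convergent approximation already developed in \Href{Lemma}{lem:reduction_to_positive_f} provides.
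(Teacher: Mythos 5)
Your key inequality is, in substance, exactly the one at the heart of the paper's own proof: after the same reduction to bump functions, the paper shows $\polarset{f}(0)\ \ge\ e^{-d}\prod_{i}\hballf(u_i)^{-c_i\hballf^2(u_i)}$, and the logarithm of that product is precisely your $G$; your derivation (intermediate estimate from \Href{Lemma}{lem:polar_func_lower_bound} at each $u_i$, then log-concavity of $\polarset{\tilde f}$ at $0=\sum_i\frac{c_i}{d+1}u_i$ together with $\sum c_i=d+1$, $\sum c_i\enorm{u_i}^2=d$) is an equivalent route to the same bound. Where you genuinely diverge is the finish. The paper never appeals to compactness of the configuration space: it restricts to \emph{regular} bump functions via \Href{Lemma}{lem:reduction_to_positive_f} and runs an explicit dichotomy --- if some $\hballf(u_i)\ge 1-\gamma_d$, then $f\le\ell_{u_i}$ together with the fact that the maximum of $f$ is attained in $\polarset{K}\subset(d+1)\ball{d}$ gives $\norm{f}_\infty\le e^d-1$ directly; otherwise all $\hballf(u_i)<1-\gamma_d$, and Carath\'eodory ($m\le 4d^2$) forces some $c_j\hballf^2(u_j)\ge\frac{1}{4d^2}$, so that single factor is at least $(1-\gamma_d)^{-1/(4d^2)}$, i.e.\ $G\ge\delta_d>0$. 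This gives a uniform, in principle explicit, $\epsilon_d$ with no limiting argument, whereas your soft argument yields no explicit constant.

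The gap is in your compactness step, and the appeal to \Href{Lemma}{lem:reduction_to_positive_f} does not close it. That lemma builds, for \emph{one fixed} bump function, a special rotational family of regular bump functions hypo-converging to it; what your argument requires is upper semicontinuity of $(u_1,\dots,u_M)\mapsto\norm{\min_i\ell_{u_i}}_\infty$ along \emph{arbitrary} sequences of admissible configurations approaching the boundary of the parameter space, and hypo-convergence actually fails there in general. For instance, if $u^{(n)}=(1-\epsilon_n)\parenth{\cos\theta_n\,u^\ast+\sin\theta_n\,w}$ with $\enorm{u^\ast}=1$, $w\perp u^\ast$ and $\theta_n=\sqrt{\epsilon_n}$, then at points $x$ of the limiting hyperplane $\{\iprod{x}{u^\ast}=1\}$ with $\iprod{x}{w}<0$ one has $\ell_{u^{(n)}}(x)\to+\infty$, while the limit bump function vanishes there; so $f_n(x)$ converges to the minimum of the non-degenerate factors, which is positive, and the hypo-convergence inequality $\limsup f_n(x_n)\le f_\ast(x)$ is violated. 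The needed statement $\limsup_n\norm{f_n}_\infty\le\norm{f_\ast}_\infty$ can be rescued (maximizers stay in $(d+1)\ball{d}$, and on the limiting hyperplanes the minimum of the non-degenerate factors is controlled by continuity from inside the half-spaces), but that argument is absent from your write-up and is exactly the nontrivial content of your last paragraph. Two smaller points: the pointwise bound $\polarset{\tilde f}(u_i)\ge(1-s_i)^{-(1-s_i)/2}\,\polarset{\tilde f}(0)^{s_i}e^{-s_i}$ is extracted from a proof that assumes $0<\enorm{u_i}<1$, so at a limiting configuration with $\enorm{u_i}=1$ you must verify it separately (it does hold, since $\tilde f$ vanishes on $\{\iprod{x}{u_i}\ge 1\}$, whence $\polarset{\tilde f}(u_i)\ge e^{-1}/\norm{\tilde f}_\infty$); and the closure of your parameter space contains configurations with some $c_i=0$, for which you should note that discarding those points leaves a John decomposition and only increases the bump function. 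If instead of the compactness finish you prove a quantitative lower bound on $G$ in the non-degenerate regime --- which is precisely the paper's Carath\'eodory step --- your argument closes and coincides with the paper's.
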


\begin{proof}
By \Href{Claim}{claim:norm_via_polar_function} and by \Href{Lemma}{lem:reduction_to_positive_f}, we may restrict to the case of a \emph{regular} John bump function.  Hence, suppose
\[
f = \min\limits_{i \in [m]} \ell_{u_i},
\]
where each \(u_i\) lies strictly inside \(\ball{d}\), and \(c_1,\ldots,c_m\) are the corresponding weights from  John's decomposition of the identity for functions. Denote the convex hull of these \(u_i\)'s by \(K\).

We split the proof into two cases.

\noindent
\textit{Case 1: A bound when a contact point is near the ``North pole.''}

We claim that there exists a positive constant $\gamma_d$ with the following property: if $\hballf(u_i) \ge 1 - \gamma_d$ for some $i \in [m]$, then
$\norm{f}_{\infty} \le e^d - 1$.
Indeed, by monotonicity arguments, \(\max f\) is achieved in \(\polarset{K}.\) By 
\Href{Corollary}{cor:john_inclusion_dec_for_f}, 
\(\polarset{K} \subset (d+1)\,\ball{d}.\) The existence of such \(\gamma_d\) follows from continuity.

\medskip
\noindent
\textit{Case 2: A lower bound on \(\polarset{f}(0)\) otherwise.}

Next, assume \(\hballf(u_i) < 1 - \gamma_d\) for all \(i \in [m].\)
By \Href{Claim}{claim:norm_via_polar_function}, it suffices to bound \(\polarset{f}(0)\) from below. 
For every \(i \in [m],\)
\[
\polarset{f}\!\parenth{\frac{u_i}{\hballf(u_i)^2}} 
\;\geq\; 
\polarset{\parenth{\ell_{u_i}}}\!\parenth{\frac{u_i}{\hballf(u_i)^2}} 
\;=\;  
\frac{1}{\hballf(u_i)}\, e^{- \frac{\enorm{u_i}^2}{\hballf(u_i)^2}}
\]
by \Href{Claim}{claim:polar_of_log-linear}.
Using first \(\sum\limits_{i=1}^m c_i\,\hballf(u_i)^2 = 1\) and then \(\sum\limits_{i=1}^m c_i\,u_i=0,\) we get
\[
\prod\limits_{i=1}^m 
\parenth{\polarset{f}\!\parenth{\frac{u_i}{\hballf(u_i)^2}}}^{c_i\,\hballf(u_i)^2} 
\;\leq\; 
\polarset{f}\!\parenth{\sum\limits_{i=1}^m c_i\,u_i} 
\;=\; 
\polarset{f}(0)
\]
by the log-concavity of \(\polarset{f}\). Thus,
\[
\polarset{f}(0) 
\;\geq\; 
\prod\limits_{i=1}^m  
\parenth{\frac{1}{\hballf(u_i)}\, e^{- \frac{\enorm{u_i}^2}{\hballf(u_i)^2}}}^{\,c_i\,\hballf(u_i)^2} 
\;=\; 
e^{-\sum\limits_{i \in [m]} c_i \enorm{u_i}^2} 
\prod\limits_{i=1}^m  
{\hballf(u_i)}^{-\,c_i\,\hballf(u_i)^2}
\;=\; 
e^{-d} 
\prod\limits_{i=1}^m  
{\hballf(u_i)}^{-\,c_i\,\hballf(u_i)^2}.
\]
Each factor \({\hballf(u_i)}^{-\,c_i\,\hballf(u_i)^2} \) is at least one. It remains to show that at least one of these factors is strictly greater than \(1 + \delta_d\) for some \(\delta_d>0.\) By Carath\'eodory's theorem \cite{caratheodory1911variabilitatsbereich}, we can assume \(m \le 4d^2\). Hence there exists some \(j \in [m]\) for which 
\[
c_j\,\hballf^2(u_j) 
\;\ge\; 
\frac{1}{m} 
\;\ge\; 
\frac{1}{4d^2}.
\]
Then 
\[
\hballf(u_j) 
\;\geq\; 
\frac{1}{2d}\,\sqrt{\frac{1}{c_j}}
\;\;\geq\;\;
\frac{1}{2d}\,\sqrt{\frac{1}{\sum_{i \in [m]} c_i}}
\;\;\geq\;\;
\frac{1}{4d^2}.
\]
This ensures the product above exceeds \(e^{-d}\) by some fixed gap \(\delta_d>0,\) 
so that ultimately
\[
\polarset{f}(0) 
\;>\; e^{-d}\,(1 + \delta_d),
\]
hence 
\(\norm{f}_{\infty} = \frac{1}{\polarset{f}(0)} < e^{d} - \epsilon_d\)
for some \(\epsilon_d>0\) depending only on \(d\). 
\end{proof}

\begin{rem}
It is not difficult to show that \(c_i \le 2\) for all \(i \in [m]\).
\end{rem}

\section{Absence of L\"owner's inclusion or misbehavior of tails}
\label{sec:absence_lowner_inclusion}

We refer the interested reader to \cite{ivanovIMRN} for a detailed discussion on L\"owner functions and their relation to John functions.
Below we recall several necessary definitions.

We will call a \emph{L\"owner function of $f$ with  respect to a given function $w$} any solution to the  

\medskip
\noindent
\textbf{Functional L\"owner problem:} Find
\begin{equation}\label{eq:lowner_func_problem}
\min\limits_{g \in \funpos{w} } 
	\int_{\Red} g
	\quad \text{subject to} \quad
	 f \leq g.
\end{equation}

As in the case of  Functional John problem \eqref{eq:john_func_problem}, the set 
\(\braces{g \in \funpos{w} \st f \le g}\) can be empty.
However, it is not hard to describe the set of functions $w$ for which it is nonempty for any proper log-concave $f$ --- specifically, those whose polar functions have bounded support. In terms of the original function $w,$ this property characterizes the behavior of the ``tails'' of $w$ at infinity. We provide the following equivalent description  without proving the equivalence here: there is a position $g$ of $w$ such that 
\[
e^{-\enorm{x}} \;\leq\; g.
\]

Our goal in this section is to show that for a reasonably large class of functions $w$, there is no inclusion of the form 
\[
f \;\leq\; L
\quad \text{and} \quad
\polarset{f} \;\leq\; \alpha \cdot L \circ  \frac{\id_d}{\alpha},
\]
where $L$ is a solution to  Functional L\"owner problem \eqref{eq:lowner_func_problem}.

The idea is that the ``tails'' of a function $f$ impose multiple restrictions on the set of positions
\(\braces{g \in \funpos{w} \st f \le g}\).  

\begin{lem} \label{lem:no_lowner_inclusion}
Let $L$ be one of the functions $e^{-\enorm{x}^p}$ with $p \geq 1$, or $\polarset{\parenth{\hballf^s}}$ with $s > 0.$ Define
\[
L_{+}(x) = \begin{cases}
L(x), & \text{if } \iprod{x}{e_1} \geq 0, \\
0,    & \text{if } \iprod{x}{e_1} < 0.
\end{cases}
\]
Then $L$ is the unique L\"owner function of $L_{+}$ with respect to $L.$
Moreover, the set of positions
\(\braces{g \in \funpos{L} \st \polarset{\parenth{L_{+}}} \leq g}\)
is empty.
\end{lem}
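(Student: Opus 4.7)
The plan is to argue in two parts. Part one identifies $L$ as the unique L\"owner function of $L_+$ with respect to $L$; part two shows that $\polarset{(L_+)}$ is bounded below by $1$ on the unbounded ray $\{-t e_1 : t \geq 0\}$, while every position of $L$ vanishes at infinity, so no position of $L$ can dominate $\polarset{(L_+)}$.

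For part one, the trivial position $\alpha = 1,\, A = \id_d,\, a = 0$ yields $L \in \funpos{L}$ with $L_+ \leq L$. Let $g(x) = \alpha\,L(Ax+a)$ be any competitor with $L_+ \leq g$. Writing $L = e^{-\varphi(\enorm{\cdot})}$ with $\varphi \geq 0$ convex, increasing, and $\varphi(0) = 0$ (one has $\varphi(r) = r^p$ for the first family, while for $L = \polarset{(\hballf^s)}$ the function $\varphi$ is the convex dual of $-\ln \hballf^s$, radial, with $\varphi(r)/r$ bounded below by a positive constant at infinity), the constraint reads
\[
\varphi(\enorm{Ax+a}) \;\leq\; \varphi(\enorm{x}) + \ln\alpha
\quad\text{for all $x$ with $\iprod{x}{e_1} \geq 0$.}
\]
Plugging $x = tv$ for a unit $v$ with $\iprod{v}{e_1} \geq 0$ and letting $t\to\infty$, the growth of $\varphi$ at infinity forces $\enorm{Av} \leq 1$. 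Since $\pm v$ cover every direction of $\Red$, one gets $\norm{A}_{\mathrm{op}} \leq 1$, hence $\enorm{\det A} \leq 1$. Plugging $x = 0$ yields $\alpha \geq e^{\varphi(\enorm{a})} \geq 1$, with equality forcing $a = 0$. Therefore
\[
\int g \;=\; \frac{\alpha}{\enorm{\det A}}\int L \;\geq\; \int L,
\]
with equality only when $\alpha = 1$, $a = 0$, and $A$ is orthogonal; by radial symmetry of $L$, any such $g$ coincides with $L$, and $L$ is the unique L\"owner function.

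For part two, set $\phi = -\ln L$; then $\phi \geq 0$ on $\Red$ and $\phi(0) = 0$, since $L(0) = 1$ for both families. For any $t \geq 0$,
\[
\polarset{(L_+)}(-t e_1)
\;=\; \inf\limits_{\iprod{x}{e_1} \geq 0}
e^{\,t\iprod{x}{e_1} + \phi(x)}
\;=\; 1,
\]
the infimum attained at $x = 0$, because the exponent is non-negative on the constraint set. Suppose, toward a contradiction, that some $g = \alpha\,L(A\cdot + a) \in \funpos{L}$ satisfied $\polarset{(L_+)} \leq g$; then $g(-t e_1) \geq 1$ for all $t \geq 0$. However $L(y) \to 0$ as $\enorm{y} \to \infty$: for $L = e^{-\enorm{x}^p}$ this is immediate, while for $L = \polarset{(\hballf^s)}$, plugging $x = (1 - 1/R)\,y/\enorm{y}$ into the defining infimum gives $L(y) \leq R^{s/2}\,e^{-R+1}$ for $R = \enorm{y}$. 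Since $A$ is non-singular, $A e_1 \neq 0$, so $g(-t e_1) = \alpha\,L(-tA e_1 + a) \to 0$ as $t \to \infty$, contradicting $g(-t e_1) \geq 1$ for large $t$.

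The main obstacle will be the asymptotic analysis in part one: one must verify rigorously, for both families, that the growth of $\varphi$ at infinity is strong enough so that $\enorm{Av} > 1$ breaks the constraint for large $t$, and then handle the boundary case $\enorm{Av} = 1$ with $a \neq 0$ by a first-order expansion of $\enorm{tAv + a}$ in $1/t$ combined with $\varphi' > 0$. Part two follows essentially by inspection, since the ``infinite tail'' of $\polarset{(L_+)}$ along $-e_1$ is manifestly incompatible with the decay of any affine image of $L$.
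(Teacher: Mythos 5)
Your proposal is correct and takes essentially the same route as the paper: the computation $\polarset{\parenth{L_+}}(-t e_1)=1$ on the ray rules out any position of $L$ dominating $\polarset{\parenth{L_+}}$, and the limit along directions $u$ with $\iprod{u}{e_1}\ge 0$ gives $A\ball{d}\subseteq\ball{d}$ together with $\alpha\ge 1$ (and $a=0$ when $\alpha=1$), so every competitor has integral at least $\int L$ with equality only for $L$ itself. The only cosmetic differences are that you deduce emptiness from the decay of positions of $L$ along the ray rather than from the impropriety of $\polarset{\parenth{L_+}}$, and the ``boundary case $\enorm{Av}=1$'' you flag needs no separate treatment, since the argument only requires $A\ball{d}\subseteq\ball{d}$ and the equality analysis is handled by $\alpha=1$, $\enorm{\det A}=1$ forcing $A$ orthogonal.
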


\begin{proof}
The emptiness of 
\(\braces{g \in \funpos{L} \st \polarset{\parenth{L_{+}}} \leq g}\)
follows immediately from the observation that
\(\polarset{\parenth{L_{+}}}\) is not proper: indeed,
\(\polarset{\parenth{L_{+}}}(-t e_1) = 1\) for all \(t \geq 1.\)

Now, assume
\(\alpha\,L\!\parenth{A(x-a)} \ge L_{+}(x)\) for all \(x \in \R^d.\)
Clearly, \(\alpha \geq 1\); and if \(\alpha = 1,\) then \(a = 0.\)

Fix any unit vector \(u\) with \(\iprod{u}{e_1} \ge 0,\) and denote \(v = Au.\)
For all \(t>0,\)
\[
\alpha 
\;\ge\; 
\frac{L_{+}(tu)}{L\!\parenth{t\,v - Aa}} 
\;=\; 
\frac{L\!\parenth{tu}}{L\!\parenth{t\,v - Aa}},
\]
where in the last step we used \(\iprod{u}{e_1}\ge 0,\) so \(L_{+}(tu) = L\parenth{tu}\). 

Taking the limit as \(t\to\infty\), and noting that $L$ is rotationally invariant, we conclude \(\enorm{u} \ge \enorm{v}.\) Hence, \(A\ball{d}\subseteq \ball{d}\). Consequently, the integral of \(\alpha\,L\!\parenth{A(x-a)}\) is at least that of \(L\). Moreover, equality is attained if \(\alpha =1\), \(a =0\), and \(A\) is an orthogonal transformation. The lemma follows.
\end{proof}

We note that for the cases \( L = e^{-\enorm{x}}\) and 
\(L = \polarset{\parenth{\hballf^s}}\) with \(s > 0,\) similar arguments can be derived from the L\"owner condition in \cite{ivanovIMRN}.  It may be surprising to find such a simple example for the Gaussian case $L(x) = e^{-\enorm{x}^2}.$

\section{Discussions}

The primary purpose of this paper was to demonstrate the possibility of extending the John inclusion to the functional setting. However, the results raise several natural questions:
\begin{enumerate}
\item The set of possible weights \(c_1, \dots, c_m\) such that there exist unit vectors \(u_1, \dots, u_m\) satisfying 
\(\sum_{i \in [m]} c_i\,u_i \otimes u_i = \id_d\)
is a convex polytope \cite[Lemma 2.4]{ivanov2020volume}. 
What is the set of possible weights appearing in  John's decomposition of the identity for functions? 

For instance, in the classical setting a weight cannot exceed 1, but in the functional setting it can (yet, in our context, it cannot exceed 2).

\item In \cite{ivanov2022functional}, the authors considered the solution to the Functional John problem \eqref{eq:john_func_problem} with \(w = \hballf^s\) for \(s> 0.\) We claim that \Href{Lemma}{lem:john_height_not_attained} can be generalized to this case directly, but our approach to \Href{Theorem}{thm:john_inclusion_one_f} provides a reasonable bound only if \(s \geq 1.\) It remains open what happens in the regime \(s \in (0,1]\), especially in the limit \(s \to 0.\) For example, is there a John-type inclusion for the solution of the Functional John problem \eqref{eq:john_func_problem} with \(w = \chi_{\ball{d}}\)?
Recall that \(w = \chi_{\ball{d}}\,\) was a starting point of the entire topic in \cite{alonso2018john}.

\item The polar function of a Gaussian density is again a Gaussian density. It is highly intriguing to investigate whether the duality between John and L\"owner ellipsoids, discussed in the Introduction, can be generalized to the functional setting for \(w = e^{-\enorm{x}^2}\) in problems \eqref{eq:john_func_problem} and \eqref{eq:lowner_func_problem}. 
\end{enumerate}

\appendix
\section{Bound on the ``height''}
\label{sec:bound_on_height}

The idea behind the proof of \Href{Lemma}{lem:height_john_f_general} is to construct an ``extremal curve'' of positions, starting with a maximal one, and to use Minkowski's determinant inequality 
\begin{equation}
\label{eq:minkowski_det_ineq}
\parenth{\det\parenth{\lambda A + \parenth{1 - \lambda}B}}^{1/d} \;\ge\;
\lambda\,\parenth{\det A}^{1/d} \;+\;
\parenth{1 - \lambda}\,\parenth{\det B}^{1/d},
\end{equation}
to obtain certain convexity properties of the integrals of these positions along the curve. We need to use positive definite matrices to apply Minkowski's determinant inequality. Let us introduce several definitions:

We denote by
\[
\funppos{g} \;=\; \braces{\alpha\,g\parenth{A\,x + a}
\st 
A\in\R^{d\times d}\text{ is positive definite},\;\alpha>0,\;a\in\R^d}
\]
the \emph{positive positions} of \(g\).

\medskip

\noindent
\textbf{Fixed-height John problem for \(f\) and \(w\):}
Find
\begin{equation}\label{eq:john_problem_pos_height}
\max\limits_{g \in \funppos{w}} 
	\int_{\R^d} g
	\quad 
	\text{subject to} 
	\quad
	g \;\leq\; f  
	\quad
	\text{and}
	\quad
	\norm{g}_{\infty} = \xi.
\end{equation}

Log-concavity allows us to consider a certain average position:

\begin{prp}[Inner interpolation of functions]\label{prp:inner-function-interpolation}
Let $f \colon \R^d \to [0,+\infty)$ be a log-concave function and  $g \colon \R^d \to [0,+\infty)$ be any function.  
Let $\alpha_1,\alpha_2>0$, $A_1, A_2$ be non-singular $d\times d$ matrices,
and $a_1,a_2 \in \R^d$ satisfy
\[
\alpha_1\,g\!\parenth{A_1^{-1}\parenth{x - a_1}} \;\leq\; f(x)
\quad
\text{and}
\quad
\alpha_2\,g\!\parenth{A_2^{-1}\parenth{x - a_2}} \;\leq\; f(x)
\]
for all $x\in\R^d$.  
Let $\beta_1,\beta_2>0$ be such that $\beta_1+\beta_2=1$. Define
\[
\alpha = \alpha_1^{\beta_1}\,\alpha_2^{\beta_2},
\quad
A = \beta_1 A_1 + \beta_2 A_2,
\quad
a = \beta_1 a_1 + \beta_2 a_2.
\]
Assume $A$ is non-singular. Then
\[
\alpha\,g\!\parenth{A^{-1}\parenth{x - a}} \;\leq\; f(x).
\]
If $A_1$ and $A_2$ are positive definite, and $g$ is integrable, then
\[
\int_{\R^d} \alpha\,g\!\parenth{A^{-1}\parenth{x - a}}\,dx
\;\geq\;
\parenth{\int_{\R^d}\alpha_1\,g\!\parenth{A_1^{-1}\parenth{x - a_1}}\,dx}^{\beta_1}
\parenth{\int_{\R^d}\alpha_2\,g\!\parenth{A_2^{-1}\parenth{x - a_2}}\,dx}^{\beta_2},
\]
with equality if and only if $A_1 = A_2.$
\end{prp}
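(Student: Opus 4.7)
The plan is to handle the two assertions separately. For the pointwise inequality, I would fix an arbitrary $y \in \R^d$ and set $x_i = A_i y + a_i$ for $i = 1, 2$, so that $A_i^{-1}(x_i - a_i) = y$, and the hypotheses reduce to $\alpha_i g(y) \leq f(x_i)$. A direct computation gives $\beta_1 x_1 + \beta_2 x_2 = (\beta_1 A_1 + \beta_2 A_2)y + (\beta_1 a_1 + \beta_2 a_2) = A y + a$. Applying the log-concavity of $f$ at this convex combination and using $\beta_1 + \beta_2 = 1$ yields
\[
f(Ay+a) \;\geq\; f(x_1)^{\beta_1} f(x_2)^{\beta_2} \;\geq\; (\alpha_1 g(y))^{\beta_1}(\alpha_2 g(y))^{\beta_2} \;=\; \alpha\, g(y),
\]
which is exactly the desired bound $\alpha g(A^{-1}(x-a)) \leq f(x)$ evaluated at $x = Ay+a$.

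For the integral inequality, I would apply the affine change of variables $y = A^{-1}(x-a)$ (and analogously for $A_i, a_i$) to obtain $\int_{\R^d}\alpha\, g(A^{-1}(x-a))\,dx = \alpha \det(A) \int_{\R^d} g$, with determinants appearing without absolute values since $A_1, A_2$ and hence $A$ are positive definite. After cancelling the common positive factor $\int_{\R^d} g$ and using $\alpha = \alpha_1^{\beta_1}\alpha_2^{\beta_2}$, the claim reduces to $\det A \geq (\det A_1)^{\beta_1}(\det A_2)^{\beta_2}$. This follows by applying Minkowski's determinant inequality \eqref{eq:minkowski_det_ineq} with the given weights and then the weighted AM-GM inequality to $(\det A_1)^{1/d}$ and $(\det A_2)^{1/d}$, and finally raising to the $d$-th power.

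For the equality characterization, equality in \eqref{eq:minkowski_det_ineq} for positive definite matrices forces $A_1$ and $A_2$ to be positively proportional, while equality in AM-GM forces $\det A_1 = \det A_2$; together these give $A_1 = A_2$. I do not foresee a serious obstacle: once the interpolated point $Ay+a$ is identified, the pointwise part is pure log-concavity of $f$, and the integral part is a routine application of Minkowski's determinant inequality. The only mild subtlety is that positive definiteness of $A_1$ and $A_2$ is genuinely required to invoke \eqref{eq:minkowski_det_ineq}, which explains why the integral statement is made under a stronger hypothesis than the pointwise one.
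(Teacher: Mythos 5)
Your proof is correct and is essentially the argument the paper intends: the paper itself gives no proof of this proposition (it defers to Lemma~4.8 of the cited work \cite{ivanovIMRN}), and your route --- log-concavity of $f$ applied at the interpolated point $Ay+a$ with $x_i=A_iy+a_i$, followed by the change of variables that reduces the integral inequality to $\det\parenth{\beta_1A_1+\beta_2A_2}\geq\parenth{\det A_1}^{\beta_1}\parenth{\det A_2}^{\beta_2}$ via Minkowski's determinant inequality \eqref{eq:minkowski_det_ineq} and weighted AM--GM, with the standard equality cases (proportionality from Minkowski, equal determinants from AM--GM) --- is exactly the expected one. The only pedantic caveat is that cancelling the factor $\int_{\R^d}g$, and the ``equality iff $A_1=A_2$'' conclusion, tacitly require $\int_{\R^d}g>0$; this is automatic in the paper's applications, where $g$ is a proper log-concave function.
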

The proposition is simple and purely technical; it was formally proven in \cite[Lemma 4.8]{ivanovIMRN}.

\begin{lem}\label{lem:uniqueness_det_fixed_height_john}
Let $f, w \colon \R^d \to [0,\infty)$ be two proper log-concave functions such that 
$w \leq f.$
Then there is a solution to Fixed-height John problem \eqref{eq:john_problem_pos_height} for $f$ and $w$  with $\xi = \norm{w}_{\infty}.$
Moreover, if $w$ has bounded support, then the solution to Fixed-height John problem \eqref{eq:john_problem_pos_height} with $\xi = \norm{w}_{\infty}
$ is unique.
\end{lem}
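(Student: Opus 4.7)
The plan is to recast the problem as a determinant maximization and to combine compactness (for existence) with the inner-interpolation \Href{Proposition}{prp:inner-function-interpolation} and a rank-one inflation argument (for uniqueness). Writing any position as $\alpha w(A^{-1}(x-a))$ with $A \succ 0$, the height constraint $\norm{g}_\infty = \norm{w}_\infty$ forces $\alpha = 1$; a change of variables gives $\int g = (\det A)\int w$. Hence the problem reduces to maximizing $\det A$ over pairs $(A, a)$ with $A \succ 0$ and $a \in \R^d$, subject to $w(y) \leq f(Ay + a)$ for every $y \in \R^d$.

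For existence, the feasible set is nonempty (it contains $(I, 0)$ because $w \leq f$) and closed by upper semi-continuity of $f$. To compactify a maximizing sequence, I pick $d+1$ affinely independent points $y_0, \ldots, y_d \in \supp w$ with a common positive lower bound $w(y_i) \geq c$; feasibility forces each $Ay_i + a$ into the bounded superlevel set $\braces{f \geq c}$, so taking differences bounds $\norm{A}$ and then $\norm{a}$ along the sequence. A subsequential limit $(A_0, a_0)$ is feasible with $\det A_0 \geq 1$, hence $A_0 \succ 0$, giving a maximizer.

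For uniqueness of the matrix part, suppose $(A_1, a_1)$ and $(A_2, a_2)$ are both optimal. Applying the inner-interpolation \Href{Proposition}{prp:inner-function-interpolation} with $\alpha_1 = \alpha_2 = 1$ and $\beta_1 = \beta_2 = 1/2$ produces a feasible pair $\parenth{\frac{A_1 + A_2}{2}, \frac{a_1 + a_2}{2}}$ whose integral is at least the geometric mean of the two optimal values; by Minkowski's determinant inequality \eqref{eq:minkowski_det_ineq} this is strict unless $A_1 = A_2$. Since the interpolated integral cannot exceed the maximum, $A_1 = A_2 =: A_0$.

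Finally, for uniqueness of the translation parameter under the bounded-support hypothesis, assume $a_1 \neq a_2$. Log-concavity of $f$ applied to $f(A_0 y + a_i) \geq w(y)$ gives $f(A_0 y + a_1 + s(a_2 - a_1)) \geq w(y)$ for every $s \in [0, 1]$ and $y \in \supp w$. Setting $e = (a_2 - a_1)/\enorm{a_2 - a_1}$, $m_- = \min_{y \in \supp w} \iprod{e}{y}$ (finite since $\supp w$ is bounded), and considering the rank-one inflation $A_\epsilon = A_0 + \epsilon ee^T$ with $a_\epsilon = a_1 - \epsilon m_- e$, a direct computation yields
\[
A_\epsilon y + a_\epsilon \;=\; A_0 y + a_1 + \epsilon\,(\iprod{e}{y} - m_-)\,e.
\]
For $\epsilon > 0$ small enough that $\epsilon(\iprod{e}{y} - m_-) \leq \enorm{a_2 - a_1}$ for every $y \in \supp w$, the displayed log-concavity inequality certifies feasibility of $(A_\epsilon, a_\epsilon)$; but $\det A_\epsilon = \det A_0\,(1 + \epsilon \iprod{e}{A_0^{-1} e}) > \det A_0$, contradicting optimality. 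I expect this $a$-uniqueness step to be the main obstacle: Minkowski's inequality only controls the matrix part, and the boundedness of $\supp w$ is essential here to keep the width of $\supp w$ along $e$ finite (so that some $\epsilon > 0$ works).
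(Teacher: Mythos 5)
Your proof is correct and follows the same skeleton as the paper's: existence by a compactness argument (the paper simply invokes a compactness lemma from its companion work, while you give a self-contained bound on $A$ and then $a$ via affinely independent points of $\supp w$ and boundedness of the superlevel sets of $f$), and uniqueness of the matrix part via \Href{Proposition}{prp:inner-function-interpolation} together with Minkowski's determinant inequality, exactly as in the paper. The only real divergence is the final step ruling out two distinct translates: the paper shifts $g_1$ by $v=\tfrac{a_2-a_1}{2}$, notes $\mathrm{hypo}(g_1)+[0,2v]\subset\mathrm{hypo}(f)$, and dilates the level sets of the shifted function in the direction $v$ about its maximizer to produce a strictly larger position; you instead perturb the matrix directly, taking $A_\epsilon=A_0+\epsilon\, e e^{T}$ with the compensating shift $a_\epsilon=a_1-\epsilon m_- e$, and verify feasibility pointwise from the log-concavity ``tube'' $f\parenth{A_0y+a_1+s(a_2-a_1)}\ge w(y)$, $s\in[0,1]$. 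Your variant is marginally cleaner: $A_\epsilon$ is manifestly symmetric positive definite, so the improved competitor visibly stays in $\funppos{w}$ (the paper's level-set scaling needs the remark that the dilated hypograph is again a positive position), and the strict gain $\det A_\epsilon=\det A_0\parenth{1+\epsilon\,\iprod{e}{A_0^{-1}e}}>\det A_0$ is immediate; both arguments use the bounded support of $w$ in the same essential way (finite width along $e$, respectively compact level sets).
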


\begin{proof}
The lemma follows from Section 6 of \cite{ivanovIMRN}. The key point is that the set of \(\parenth{A,\alpha,a} \in \R^{d\times d}\times\R\times\R^d\) satisfying
\[
\alpha\,w\!\parenth{A^{-1}\parenth{x-a}} \;\leq\; f(x)
\]
is either compact or empty (see \cite[Lemma 6.1]{ivanovIMRN}). Existence thus follows by a standard compactness argument.

We only need to show uniqueness when $w$ has bounded support, which is achieved by a slight modification of \cite[Proposition 6.2]{ivanovIMRN}.
Let $A_1$ and $A_2$ be rank-$d$ positive definite matrices, $a_1,a_2 \in \R^d$, such that the functions
\[
g_1(x) = \alpha\,w\!\parenth{A_1^{-1}\parenth{x - a_1}}
\quad
\text{and}
\quad
g_2(x) = \alpha\,w\!\parenth{A_2^{-1}\parenth{x - a_2}}
\]
are both solutions to Fixed-height John problem \eqref{eq:john_problem_pos_height} for $f$ and $w$. In particular, their integrals are equal. 
By \Href{Proposition}{prp:inner-function-interpolation}, it follows that $A_1 = A_2.$ Hence the graphs of $g_1$ and $g_2$ differ by a translation.

Denote by $\mathrm{hypo}\,g$ the \emph{hypograph} of a nonnegative function $g$:
\[
\mathrm{hypo}\,g = \braces{(x,t) \st 0 \,\le\,t\,\le\,g(x)}.
\]
Because $f$ is log-concave, the set $\mathrm{hypo}(g_1) + [0,2v]\subset \mathrm{hypo}(f)$ for some non-zero $v \in \R^d$.  
We claim that there is a position $g$ of $w$ under $f$ such that $\int_{\R^d} g > \int_{\R^d} g_1$. 

Indeed, consider $g_{1.5}(x) = g_1\!\parenth{x - v}$. Clearly, 
$\mathrm{hypo}(g_{1.5}) \subset \mathrm{hypo}(g_1) + [0,2v] \subset \mathrm{hypo}(f).$
Let $g_{1.5}$ attain its maximum at $z$. Then $z$ belongs to all non-empty level sets 
\[
\brackets{g_{1.5}>  \Theta} = \braces{x \in \R^d \st g_{1.5}(x) > \Theta}
\]
of $g_{1.5}$ and positive $\Theta$, which are compact and convex because $g_{1.5}$ is log-concave with bounded support.  Let $S_{\epsilon}$ be the linear transformation that scales $\R^d$ in the direction of $v$ by the factor $1 + \epsilon$. Then, for sufficiently small positive $\epsilon$,  
\[
S_{\epsilon}\parenth{\brackets{g_{1.5} > \Theta} - z}  +z
\;\subset\; 
\brackets{g_1 > \Theta} + [0,2v]
\]
holds for all $\Theta \in \parenth{0,\norm{g_1}_{\infty}}.$
That is,  
\[
S_{\epsilon}\parenth{\mathrm{hypo}(g_{1.5}) - z}  +z
\;\subset\; 
\mathrm{hypo}(f).
\]
However, the left-hand set above is the hypograph of some positive position $g$ of $w$.  Uniqueness follows.
\end{proof}

By compactness and log-concavity, we have
\begin{lem}
Let $f, w \colon \R^d \to [0,\infty)$ be two proper log-concave functions such that 
the support of $w$ is bounded. Then for any $\xi \in \parenth{0, \norm{f}_{\infty}}$, there is a positive position $g$ of $w$ below $f$ such that $\norm{g}_{\infty} = \xi.$
\end{lem}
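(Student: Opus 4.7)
The plan is to construct $g$ as a very thin rescaled copy of $w$ placed near a point where $f$ is close to its supremum, taking the positive definite matrix to be a small positive multiple of the identity. This keeps the construction entirely elementary: no compactness or continuous-curve argument is needed because we can write down the witness $g$ by hand.

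The first step is to produce an interior point $a^{\ast}$ of $\supp f$ with $f(a^{\ast}) > \xi$. Since $f$ is log-concave with finite positive integral, $\supp f$ is a convex set of positive measure, hence has non-empty interior, and $f$ is continuous on this interior. The supremum $\norm{f}_{\infty}$ may not be attained at an interior point, but it can be approached: fix any interior point $x_0 \in \supp f$, and choose $y$ with $f(y) > \xi$ (such $y$ exists because $\xi < \norm{f}_{\infty}$). For $\lambda \in [0,1)$ the point $\lambda y + (1-\lambda) x_0$ lies in the interior of $\supp f$ by standard convexity, and by log-concavity
\[
f\!\parenth{\lambda y + (1-\lambda) x_0} \;\geq\; f(y)^{\lambda}\, f(x_0)^{1-\lambda},
\]
which converges to $f(y) > \xi$ as $\lambda \to 1$. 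So for $\lambda$ sufficiently close to $1$, the point $a^{\ast} := \lambda y + (1-\lambda) x_0$ is an interior point of $\supp f$ with $f(a^{\ast}) > \xi$.

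Next, by continuity of $f$ at $a^{\ast}$, I would choose an open neighborhood $U$ of $a^{\ast}$ on which $f > \xi$. Since $\supp w$ is bounded, I then pick $r > 0$ small enough that $a^{\ast} + r \cdot \supp w \subset U$. Setting $A = r\,\id_d$ (positive definite), $a = a^{\ast}$, and $\alpha = \xi / \norm{w}_{\infty}$, the function
\[
g(x) \;=\; \alpha\, w\!\parenth{A^{-1}(x - a)} \;=\; \frac{\xi}{\norm{w}_{\infty}}\, w\!\parenth{\frac{x - a^{\ast}}{r}}
\]
lies in $\funppos{w}$ and satisfies $\norm{g}_{\infty} = \alpha\,\norm{w}_{\infty} = \xi$. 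The pointwise bound $g \leq f$ holds because $g$ vanishes outside the translate $a^{\ast} + r \cdot \supp w \subset U$, while inside this set $g(x) \leq \alpha\,\norm{w}_{\infty} = \xi < f(x)$.

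The only real obstacle is the first step -- locating an interior point of $\supp f$ where $f$ strictly exceeds $\xi$ -- and log-concavity handles this directly via the convex-combination argument above. Verifying that $g$ is a valid positive position, that $\norm{g}_{\infty} = \xi$, and that $g \leq f$ is then routine.
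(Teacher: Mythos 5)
Your construction is correct, and every step checks out: the support of $f$ is convex with non-empty interior (log-concavity plus positive integral), $f$ is continuous there since $-\log f$ is convex and finite on that interior, the convex-combination trick legitimately produces an interior point $a^{\ast}$ with $f(a^{\ast})>\xi$ even when the supremum is not attained inside, and the rescaled translate $g=\frac{\xi}{\norm{w}_\infty}\,w\!\parenth{\frac{\cdot-a^{\ast}}{r}}$ with $r$ small is a positive position (the map $x\mapsto \frac{1}{r}x-\frac{1}{r}a^{\ast}$ has positive definite linear part) of sup-norm exactly $\xi$ lying below $f$, since it vanishes off $a^{\ast}+r\,\supp w\subset U$ and is at most $\xi<f$ on $U$. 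The boundedness of $\supp w$ is used exactly where it must be, to shrink the translated support into $U$; you also implicitly use that $\norm{w}_\infty<\infty$, which holds for any proper log-concave function and is worth a half-sentence. The paper itself gives no argument for this lemma beyond the phrase ``by compactness and log-concavity,'' which suggests an argument via the compactness of the family of positions below $f$ (as in the cited Lemma 6.1 of \cite{ivanovIMRN}) together with a continuity/intermediate-value consideration along a curve of positions; your route is genuinely different and more elementary, since you exhibit an explicit witness for each $\xi$ and need no compactness at all. What the implicit compactness approach buys is uniformity with the surrounding machinery (the same compactness is needed anyway for existence in the Fixed-height John problem), but for the statement as written your direct construction is simpler and fully sufficient.
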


\noindent
\Href{Lemma}{lem:height_john_f_general} follows from the previous three statements and the following distilled version of \cite[Theorem 1.1]{alonso2018john}:

\begin{lem}
Let $f, w \colon \Red\to[0,\infty)$ be two proper log-concave functions such that 
$w$ is a John function for $f$ with respect to $w.$  Additionally, assume that for every $\xi \in \parenth{0, \norm{f}_\infty}$ there is a positive position $g$ of $w$ below $f$ with $\norm{g}_\infty = \xi.$ Then
\[
\norm{w}_{\infty} \;\leq\; \norm{f}_{\infty} \;\leq\; e^d \,\norm{w}_{\infty}.
\]
\end{lem}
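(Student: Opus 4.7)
My plan follows the strategy of \cite[Theorem 1.1]{alonso2018john}. The lower bound $\norm{w}_\infty \leq \norm{f}_\infty$ is immediate from $w \leq f$, so the real task is $\norm{f}_\infty \leq e^d\,\norm{w}_\infty$. I will parameterize admissible positive positions by their scalar factor $\alpha$, track the determinantal ``volume factor'' they contribute, and combine Minkowski's determinant inequality with the inner interpolation of \Href{Proposition}{prp:inner-function-interpolation} to get strong concavity of this volume function. Concretely, set
\[
F(\alpha) \;=\; \sup\!\braces{\det A \st A \in \R^{d\times d}\text{ positive definite},\;a \in \R^d,\;\alpha\,w\!\parenth{A^{-1}(x-a)} \leq f(x)\text{ for all }x},
\]
with $F(\alpha) = 0$ if no admissible triple exists (by \Href{Lemma}{lem:uniqueness_det_fixed_height_john} the sup is in fact attained whenever $F(\alpha)>0$), and let $k(s) = F(e^s)^{1/d}$.

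Given admissible triples $(A_i, a_i, \alpha_i)$ for $i=1,2$ and weights $\beta_1,\beta_2 > 0$ summing to $1$, \Href{Proposition}{prp:inner-function-interpolation} guarantees that $(\beta_1 A_1 + \beta_2 A_2,\;\beta_1 a_1 + \beta_2 a_2,\;\alpha_1^{\beta_1}\alpha_2^{\beta_2})$ is again admissible; Minkowski's inequality \eqref{eq:minkowski_det_ineq} then bounds $\parenth{\det(\beta_1 A_1 + \beta_2 A_2)}^{1/d}$ from below by $\beta_1(\det A_1)^{1/d} + \beta_2(\det A_2)^{1/d}$. Taking successive suprema gives $k(\beta_1 s_1 + \beta_2 s_2) \geq \beta_1 k(s_1) + \beta_2 k(s_2)$, so $k$ is concave on its effective domain. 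Because $w$ is the John function of $f$ with respect to $w$, every admissible $g = \alpha\,w(A^{-1}(x-a))$ satisfies $\int g \leq \int w$; writing $\int g = \alpha\,\det A\,\int w$, this forces $\alpha\,\det A \leq 1$, i.e.\ $k(s) \leq e^{-s/d}$. The trivial admissible triple $(\id_d, 0, 1)$---valid because $w \leq f$---gives $F(1) \geq 1$, and combined with the previous bound this pins down $k(0) = 1$.

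Hence $k$ is concave, lies below the smooth envelope $s \mapsto e^{-s/d}$, and touches it at $s=0$. A one-sided derivative comparison yields $k'_+(0) \leq -1/d$, and concavity propagates this into the global affine bound $k(s) \leq 1 - s/d$ for every $s \geq 0$ in the effective domain of $k$. Since $k \geq 0$, this forces $k(s) = 0$ for all $s > d$, i.e.\ no admissible position exists with scalar factor $\alpha > e^d$. The second hypothesis of the lemma, however, furnishes an admissible position at every height $\xi \in (0, \norm{f}_\infty)$, placing every $\alpha \in (0, \norm{f}_\infty / \norm{w}_\infty)$ in the effective domain of $F$; comparing the two, $\norm{f}_\infty/\norm{w}_\infty \leq e^d$, as required.

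The main technical step, as I see it, is the transfer of Minkowski's inequality from individual admissible triples to the supremum-defined $k$: given the attainment of the sup via \Href{Lemma}{lem:uniqueness_det_fixed_height_john}, this reduces to applying the inequality to the optimal triples at $\alpha_1$ and $\alpha_2$, but some care is needed at the boundary of the effective domain (where one approximates by interior points). Once the concavity of $k$ together with the touching condition $k(0)=1$ are in hand, the remaining tangent-and-linear-bound step is a short concavity exercise, and the final comparison with the second hypothesis closes the argument.
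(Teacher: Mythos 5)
Your proposal is correct and takes essentially the same route as the paper: your $F$ (and $k$) is the paper's $\Psi$ (and $\Phi$) up to normalizing the height by $\norm{w}_\infty$, and both arguments derive concavity from \Href{Proposition}{prp:inner-function-interpolation} together with Minkowski's determinant inequality \eqref{eq:minkowski_det_ineq}, use John-optimality to place the concave function under the convex envelope $e^{-s/d}$ touching at the John position, and conclude via the tangent-line bound combined with the all-heights hypothesis. The only cosmetic difference is that you work with a supremum-defined $F$ (with attainment via \Href{Lemma}{lem:uniqueness_det_fixed_height_john}) while the paper evaluates $\det A_\alpha$ at the fixed-height maximizers directly.
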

\begin{proof}
There is nothing to prove if $\norm{f}_\infty = \norm{w}_\infty.$
Assume $\norm{f}_\infty > \norm{w}_\infty.$
Define a function $\Psi: \parenth{0, \norm{f}_{\infty}} \to \R^+$ as follows.
By \Href{Lemma}{lem:uniqueness_det_fixed_height_john}, for any 
$\alpha \in \parenth{0, \norm{f}_{\infty}}$, there is a solution $g_\alpha$ to Fixed-height John problem \eqref{eq:john_problem_pos_height} for $f$ and $w$ with $\xi = \alpha.$  Let 
\[
g_\alpha(x) = \tilde{\alpha} \, w \!\parenth{A_\alpha^{-1}\parenth{x - a_\alpha}}
\]
for some positive-definite $A_\alpha,$ point $a_\alpha,$ and $\tilde{\alpha} = \frac{\alpha}{\norm{w}_\infty}.$
Set 
\[
\Psi(\alpha) = \det A_{\alpha}.
\]
For any $\alpha_1, \alpha_2 \in \parenth{0, \norm{f}_{\infty}}$ and $\lambda \in [0,1],$ 
we claim
\begin{equation}
\label{eq:ellips-d_quasi_log-concavity}
\parenth{\Psi\!\parenth{\alpha_1^\lambda \,\alpha_2^{1-\lambda}}}^{1/d} 
\;\ge\;
\lambda \,\parenth{\Psi(\alpha_1)}^{1/d} 
\;+\; 
\parenth{1 - \lambda}\,\parenth{\Psi(\alpha_2)}^{1/d}.
\end{equation}
Indeed, by \Href{Proposition}{prp:inner-function-interpolation},
\[
\Psi\!\parenth{\alpha_1^\lambda \,\alpha_2^{1-\lambda}} 
\;\ge\;
\det\parenth{\lambda A_1 + \parenth{1-\lambda}A_2}.
\]
Now, \eqref{eq:ellips-d_quasi_log-concavity} follows immediately from
Minkowski's determinant inequality \eqref{eq:minkowski_det_ineq}.

Set 
\[
\Phi(t) 
= 
\parenth{\Psi\!\!\parenth{e^t}}^{1/d}
\]
for $t \in \parenth{-\infty, \ln \norm{f}_{\infty}}.$
By \eqref{eq:ellips-d_quasi_log-concavity}, $\Phi$ is concave on its domain.

Also, since $w$ is the solution to Functional John problem  \eqref{eq:john_func_problem}, for every $\alpha$ in the domain of $\Psi$, 
\[
\Psi(\alpha)\,\alpha 
\;\leq\;
\Psi\!\parenth{\norm{w}_{\infty}}\norm{w}_{\infty}.
\]
Letting   $t_0 = \ln  \norm{w}_{\infty}$, we obtain
\[
\Phi(t)
\;\leq\;
\Phi(t_0)\,e^{\frac{t_0 - t}{d}}
\]
for any $t$ in the domain of $\Phi.$ The right-hand side is a convex function in $t$, whereas $\Phi$ is concave. Since they agree at $t = t_0,$ we conclude that the graph of $\Phi$ lies below the tangent line to $\Phi(t_0)\,e^{\frac{t_0 - t}{d}}$ at the point $t_0.$ Thus
\[
\Phi(t)
\;\leq\;
\Phi(t_0)\,\!\!\parenth{1 - \frac{t - t_0}{d}}.
\]
As $t \to \ln \norm{f}_{\infty}$ and noting that $\Phi$ remains positive, we get
\[
0
\;\leq\;
1 - \frac{\ln \norm{f}_{\infty}}{d} + \frac{t_0}{d}.
\]
In other words,
\[
t_0 \;\ge\; -\,d + \ln \norm{f}_{\infty}.
\]
Hence, 
\[
\norm{w}_{\infty} 
=
e^{t_0}
\;\ge\; 
e^{-d}\,\norm{f}_{\infty}.
\]
This completes the proof of the lemma.
\end{proof}

\bibliographystyle{alpha}
\newcommand{\etalchar}[1]{$^{#1}$}

\end{document}